\definecolor{darkgreen}{rgb}{0,0.35,0}
\definecolor{darkblue}{rgb}{0,0,0.6}
\newtheorem*{tutteslinkingtheorem}{Tutte's Linking Theorem}
\declaretheorem[numberwithin=section]{theorem}
\declaretheorem[numberlike=theorem]{lemma}
\declaretheorem[numberlike=theorem]{corollary}
\declaretheorem[numberlike=theorem]{conjecture}
\declaretheorem[numberlike=theorem]{proposition}
\newtheoremstyle{parentheses}{\topsep}{\topsep}{\itshape}{}{}{}{ }{\thmnumber{(#2)}}
\theoremstyle{parentheses}
\newtheorem{claim}{}
\makeatletter\@addtoreset{claim}{theorem}\makeatother
\renewcommand{\d}{\backslash}
\DeclareMathOperator{\si}{si}
\DeclareMathOperator{\cl}{cl}
\DeclareMathOperator{\GF}{GF}
\DeclareMathOperator{\F}{\mathbb{F}}
\newcommand{\defn}[1]{\textbf{#1}}
\title{Matroids with a modular 4-point line}
\author{Rohan Kapadia\thanks{This research was partially conducted at the University of Waterloo, Waterloo, Ontario, Canada. Current address: Concordia University, Montr\'eal, Qu\'ebec, Canada.}}
\date{September 20, 2013; revised March 18, 2014}
\begin{document}

\maketitle

\begin{abstract}
 A result of Seymour implies that any $3$-connected matroid with a modular $3$-point line is binary. We prove a similar characterization for $3$-connected matroids with modular $4$-point lines.
 We show that such a matroid is either representable over $\GF(3)$ or $\GF(4)$ or has an $F_7$-minor and either an $F_7^-$- or $(F_7^-)^*$-minor.
\end{abstract}

\section{Introduction}

We call a subset $X$ of the ground set of a matroid $M$ \defn{modular} if, for every flat $F$ of $M$,
\[ r_M(X) + r_M(F) = r_M(X \cup F) + r_M(X \cap F).\]

In many cases, the modularity of $X$ in $M$ forces certain structural properties of the restriction $M | X$ to be shared by $M$.
For instance, a theorem of Seymour implies the following characterization of matroids with modular $3$-point lines.

\begin{theorem}[Seymour, \cite{Seymour:u24roundedness}] \label{thm:seymourmodulartriangle}
 Every $3$-connected matroid with a modular $3$-point line is binary.
\end{theorem}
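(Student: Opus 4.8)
I would prove this by contradiction, combining Tutte's excluded-minor characterization of binary matroids with Seymour's roundedness theorem (the cited result). So suppose $M$ is a $3$-connected matroid carrying a modular $3$-point line $L=\{a,b,c\}$, yet $M$ is not binary, and aim for a contradiction.

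The first step is to record two facts about modular $3$-point lines. (i) If $L$ is a modular line of a matroid $M'$ and $S\subseteq E(M')\setminus L$ is such that no point of $L$ becomes a loop of $M'/S$ — equivalently, $r_{M'/S}(L)=2$ — then the image of $L$ is again a modular line of $M'/S$. This is a routine rank computation using that the flats of $M'/S$ are the sets $F\setminus S$ with $F$ a flat of $M'$ containing $S$, together with the fact that modularity makes $L$ skew to $\cl_{M'}(S)$ and that skewness is inherited by subsets. (ii) A modular $3$-point line $L=\{a,b,c\}$ has no fourth point in its closure: if $d\in\cl_{M'}(L)$ were a point other than $a,b,c$, then the modular equation applied to the flat $F=\cl_{M'}(\{d\})$ reads $2+1=r_{M'}(L\cup F)+r_{M'}(L\cap F)=2+0$, which is absurd.

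The heart of the argument is to use Seymour's roundedness theorem to pull a $U_{2,4}$-minor onto our line. Since $M$ is $3$-connected and, by Tutte, has a $U_{2,4}$-minor, the roundedness theorem should yield a $U_{2,4}$-minor $N=M/C\setminus D$ whose ground set contains the triangle $L$. Because $N\cong U_{2,4}$ is loopless, $a,b,c$ are non-loops of $M/C$, so $\cl_M(C)\cap L=\emptyset$ and hence $r_{M/C}(L)=2$; by (i), $L$ is then a modular $3$-point line of $M/C$. But $E(N)$ is a $4$-point line of $M/C$ containing $\{a,b,c\}$, so its fourth point lies in $\cl_{M/C}(L)$, contradicting (ii). This contradiction proves the theorem.

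The main obstacle is precisely the invocation of Seymour's theorem: producing a $U_{2,4}$-minor whose ground set contains \emph{all three} points of $L$, rather than one in which some point of $L$ must be contracted or deleted. This is the kind of conclusion roundedness is designed for — one feeds the prescribed triangle into the roundedness machinery — but making the reduction precise requires careful bookkeeping of which elements are contracted versus deleted and of the $3$-connectivity along the way; and $3$-connectivity is genuinely needed, since $U_{2,4}\oplus U_{2,3}$ is non-binary but its evident $3$-point line is modular. One should also dispatch the small-rank cases (say $r(M)\le 3$) directly, by classifying rank-$\le 3$ matroids that carry a modular $3$-point line. Tutte's Linking Theorem is a natural tool for controlling the connectivity bookkeeping in the general reduction.
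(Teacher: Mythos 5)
The paper does not actually prove this theorem; it cites it as a consequence of Seymour's roundedness result in \cite{Seymour:u24roundedness}, so there is no proof of record to compare against. Your approach --- Tutte's excluded-minor characterization of binary matroids, $U_{2,4}$-roundedness, and the two elementary observations on modular lines --- is exactly the derivation the citation has in mind, and it is correct.

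The one step worth tightening is the one you single out as the main obstacle. You do not need a roundedness statement strong enough to place the whole triangle $L$ inside the $U_{2,4}$-minor; $2$-roundedness of $U_{2,4}$ (Seymour's theorem: in a $3$-connected matroid with a $U_{2,4}$-minor, any two prescribed elements lie in the ground set of some $U_{2,4}$-minor) already suffices. Apply it to $\{a,b\}$ to obtain $N = M/C\setminus D \cong U_{2,4}$ with $a,b\in E(N)$. Since $N$ is simple, $a$ and $b$ are distinct non-loop points of $M/C$, so $a,b\notin\cl_M(C)$. Because $L$ is a modular flat, the modular equation with $F=\cl_M(C)$ gives $\sqcap_M(L,C) = r_M(L\cap\cl_M(C))$; the flat $L\cap\cl_M(C)$ is contained in $L$ and misses $a$ and $b$, so it is $\emptyset$ or $\{c\}$, and in the latter case $r_{M/C}(L)=1$, which would make $a$ and $b$ parallel in $M/C$. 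Hence $L$ is skew to $C$: in particular $c\notin C$, $r_{M/C}(L)=2$ with $a,b,c$ pairwise non-parallel non-loops, and $L$ is a modular $3$-point line of $M/C$ by your fact (i). Now $E(N)\subseteq\cl_{M/C}(\{a,b\})=\cl_{M/C}(L)$, so the simple rank-$2$ matroid $N$ supplies four distinct non-loop points inside $\cl_{M/C}(L)$, contradicting your fact (ii), which caps the number of points of $\cl_{M/C}(L)$ at three. This disposes of the bookkeeping you were worried about, invokes Seymour's theorem in exactly the form it is stated, and renders the separate small-rank cases you propose unnecessary.
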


We call a line, or rank-two flat, a \defn{$k$-point line} if it is the union of $k$ points, or rank-one flats (in a simple matroid this is a line with $k$ elements).
Another example of modularity forcing structure on a matroid is this fact that also follows from a theorem of Seymour \cite{Seymour:triplesinmatroidcircuits}: if $M$ is a vertically $4$-connected matroid with a modular set $X$ such that $M|X \cong M(K_4)$, then $M$ is graphic. Modularity also plays a role in Geelen, Gerards, and Whittle's work on structure theorems for minor-closed classes of matroids representable over a finite field \cite{GeelenGerardsWhittle}.

\autoref{thm:seymourmodulartriangle} can be extended from the binary case to other finite fields if we consider modular rank-$3$ sets rather than lines: the main result of \cite{modularplanepaper} is that if $M$ is a vertically $4$-connected matroid with a modular rank-$3$ set $X$, then $M$ is representable over the same set of finite fields as $M | X$.
On the other hand, it is much more complicated to characterize the structure that is forced on matroids by modular lines of more than three points.
In this paper, we consider this problem for the case of modular $4$-point lines.
Our main theorem is the following.

\begin{restatable}{theorem}{fourpointlinetheorem} \label{thm:4ptlinetheorem}
If $M$ is a $3$-connected matroid with a modular $4$-point line then either
\begin{enumerate}[(i)]
 \item $M$ is ternary, \label{out:4ptlinetheorem-first}
 \item $M$ is quaternary, or \label{out:4ptlinetheorem-second}
 \item $M$ has an $F_7$-minor and either an $F_7^-$- or $(F_7^-)^*$-minor. \label{out:4ptlinetheorem-last}
\end{enumerate}
\end{restatable}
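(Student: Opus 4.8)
The plan is to prove the theorem by induction on $|E(M)|$, treating rank~$3$ as the base case and handling higher rank by a single-element reduction that preserves $3$-connectivity together with a modular $4$-point line. For the base case, suppose $r(M)=3$; we may replace $M$ by $\si(M)$, since parallel classes change none of the three outcomes. Then $M$ is a simple rank-$3$ matroid with a modular $4$-point line $L$, and modularity says precisely that every line of $M$ meets $L$ (a line disjoint from $L$ would violate the modular law with $F$ that line). I would enumerate such matroids directly: fixing a point $p$ of $L$ and grouping the points of $E(M)\setminus L$ by the line through $p$ they lie on organizes $M$ into pencils, and the requirement that every two off-$L$ points lie on a line meeting the four-point set $L$ forces all but finitely many pairs to be covered, so $M$ is a subgeometry of $\PG(2,3)$ or one of a short list of small exceptions. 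I expect the conclusion to be that $M$ either embeds in $\PG(2,3)$, giving~(i); or is one of a few $\GF(4)$-representable geometries carrying $U_{2,4}$ as a modular line, giving~(ii); or contains an $F_7$-restriction together with a non-Fano configuration, which supplies the minors demanded in~(iii). This is a concrete, finite case check.

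For the inductive step let $M$ be $3$-connected of rank at least~$4$ with a modular $4$-point line $L$. The key reduction lemma I need is that there is an element $e\in E(M)\setminus L$ for which $M\setminus e$ or $M/e$ is $3$-connected and still carries a modular $4$-point line. Deleting an off-$L$ element automatically keeps $L$ a modular $4$-point line, so for deletions the only issue is $3$-connectivity; if no off-$L$ deletion preserves it, I would use the standard fact that $M/e$ is then $3$-connected and verify that modularity of $L$ keeps $L$ a flat, a $4$-point line, and modular in $M/e$ for a suitable $e$ (the four points of $L$ stay rank-one and pairwise non-parallel in $M/e$ whenever $e\notin L$, and modularity constrains the plane $\cl_M(L\cup e)$ enough to control the rest). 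Carrying this out is essentially a chain-theorem argument adapted to the presence of the modular line, with the usual care needed because the canonical minor $U_{2,4}$ is $3$-connected but a whirl; this is one of the two main technical points.

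Granting the reduction, let $M'$ be the resulting smaller $3$-connected matroid with a modular $4$-point line. By induction $M'$ satisfies~(i),~(ii), or~(iii); if it satisfies~(iii) then so does $M$, since the relevant minors are inherited, so we may assume $M'$ is ternary or quaternary and that $M$ is a single-element extension or coextension of $M'$. The task is to show such an extension is again ternary, again quaternary, or satisfies~(iii). Here modularity of $L$ in $M$ is used decisively: every line through the new element meets $L$, so the extension is essentially pinned down by its trace on the rank-$3$ flat $\cl_M(L\cup e)$, a rank-$3$ matroid with a modular $4$-point line that is already understood from the base case, together with modularity-forced behaviour on the remaining flats; thus the extension behaves like a principal extension or a generalized parallel connection across a line meeting $L$. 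For ternary $M'$ the compatible single-element extensions are governed by the modular cuts realizable over $\GF(3)$; an extension that is not ternary must then, I claim, already contain the configuration giving an $F_7$-minor and, because the new point is forced onto a line, an $F_7^-$- or $(F_7^-)^*$-minor. The quaternary case is symmetric, with $F_7^-$ now playing the role of the small obstruction.

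I expect this last step to be the crux: classifying the single-element extensions of a $3$-connected ternary or quaternary matroid that carry a modular $4$-point line, and identifying exactly when such an extension is pushed out of both $\GF(3)$- and $\GF(4)$-representability into outcome~(iii). This is delicate because the classes of ternary and quaternary matroids are incomparable yet overlap on $U_{2,4}$, so the induction genuinely moves between outcomes~(i) and~(ii) at successive steps, and the extension analysis must track which of the two small obstructions, $F_7$ or $F_7^-$, is already present. Preserving the modular line under the reduction, especially in the contraction case, is the secondary technical hurdle.
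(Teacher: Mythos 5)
Your proposal takes a genuinely different route from the paper: an induction on $|E(M)|$ via single-element reductions, whereas the paper never reduces one element at a time. Instead, the paper uses Tutte's Linking Theorem (in \autoref{lem:squishedminor}) to show that any $3$-connected $M$ with a modular $4$-point line $L$ and a $3$-connected minor $N_0$ has a $3$-connected minor $N'$ with $E(N') = L \cup E(N)$ for some $N_0$-minor $N$ of $N'$, so $r(N') \leq r(N_0) + 2$; it then shows by direct case analysis in bounded rank that $M$ cannot have a $U_{2,6}$-, $U_{4,6}$-, or (without outcome (iii)) a $P_6$-minor, and concludes by citing the published excluded-minor characterizations of $\GF(3)$- and $\GF(4)$-representability. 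The genuine gap in your proposal is exactly the step you flag as the crux: you must show that a $3$-connected single-element extension or coextension of a ternary or quaternary matroid that retains a modular $4$-point line is again ternary, quaternary, or lands in outcome (iii). You give no argument for this, only the observation that modularity forces the new point onto lines through $L$; but in the coextension case the new element is not directly constrained by $L$ at all, and even in the extension case the modular-cut analysis you gesture at is not carried out. This is essentially the full content of the theorem compressed into one unproven step, so the proposal as written does not establish the result.

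Two secondary concerns. The base case is not a ``concrete, finite case check'': a modular $4$-point line in a simple rank-$3$ matroid is precisely a line meeting every other line, and there are infinitely many such matroids of unbounded size (for instance all restrictions of $\PG(2,3)$ containing a full line), so a structural argument is needed; in the paper the rank-$3$ situation is controlled by \autoref{lem:fanopluslinehasnoextension}, which shows an $F_7$-restriction through three points of $L$ already fills the plane. And the reduction lemma (finding $e \notin L$ whose removal by deletion or contraction preserves $3$-connectivity, the $4$-point line, and its modularity) is plausible but is not a bare application of the wheels-and-whirls or chain theorems, since you must additionally preserve the flat $L$ and its modularity; the paper's \autoref{lem:squishedminor} handles this carefully via the Linking Theorem rather than one element at a time.
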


We denote by $F_7$ the binary projective plane, also called the Fano matroid, and by $F_7^-$ the non-Fano matroid, which is obtained from $F_7$ by relaxing a circuit-hyperplane.
Since $F_7$ is representable only over fields of characteristic two and $F_7^-$ is representable only over fields of characteristic different from two, we have this corollary:

\begin{corollary}
 Any $3$-connected, representable matroid with a modular $4$-point line is representable over $\GF(3)$ or $\GF(4)$.
\end{corollary}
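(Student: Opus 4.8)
The plan is to deduce this immediately from \autoref{thm:4ptlinetheorem}. Let $M$ be a $3$-connected representable matroid with a modular $4$-point line, and apply the theorem. If outcome~\ref{out:4ptlinetheorem-first} holds then $M$ is ternary, that is, representable over $\GF(3)$; if outcome~\ref{out:4ptlinetheorem-second} holds then $M$ is quaternary, that is, representable over $\GF(4)$. So we may assume that neither holds, and hence that $M$ satisfies outcome~\ref{out:4ptlinetheorem-last}: $M$ has an $F_7$-minor and either an $F_7^-$-minor or an $(F_7^-)^*$-minor.

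I would then derive a contradiction. Fix a field $\mathbb{F}$ over which $M$ is representable. The class of $\mathbb{F}$-representable matroids is closed under taking minors, so the $F_7$-minor of $M$ forces $F_7$ to be $\mathbb{F}$-representable; as noted above, this means $\mathbb{F}$ has characteristic two. Applying the same minor-closure to the $F_7^-$-minor --- or, in the other sub-case, to the $(F_7^-)^*$-minor together with the standard fact that representability over a field is preserved by matroid duality --- forces $F_7^-$ to be $\mathbb{F}$-representable, and hence $\mathbb{F}$ has characteristic different from two. This contradiction rules out outcome~\ref{out:4ptlinetheorem-last}, so $M$ is ternary or quaternary, and therefore representable over $\GF(3)$ or $\GF(4)$.

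There is no real obstacle here beyond \autoref{thm:4ptlinetheorem} itself: the corollary is just the familiar observation that a representable matroid cannot have both an $F_7$-minor and an $F_7^-$-minor, with the duality-invariance of representability absorbing the $(F_7^-)^*$ case. All of the difficulty lies in proving the theorem, not in this deduction.
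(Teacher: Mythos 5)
Your proof is correct and follows the same reasoning the paper indicates: apply \autoref{thm:4ptlinetheorem}, note that outcome (iii) is impossible for a representable matroid because $F_7$ is representable only in characteristic two while $F_7^-$ (and hence $(F_7^-)^*$, by duality-invariance of representability) is representable only in characteristic different from two. The paper states this corollary with exactly this one-line justification, so there is nothing to add.
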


Any $4$-point line is modular in a ternary matroid, so it is clear that being ternary should be one of the outcomes of \autoref{thm:4ptlinetheorem}.
When a $3$-connected matroid $M$ has a modular $4$-point line, $L$, one case in which $M$ is quaternary is when there is an element $e \in L$ such that $L \setminus \{e\}$ is modular in $M \d e$. In this case, $L \setminus \{e\}$ is a $3$-point line of $M \d e$ (because $M$ is simple), and $M \d e$ is also $3$-connected, so $M \d e$ is binary by \autoref{thm:seymourmodulartriangle}.

A $4$-point line need not be modular in an arbitrary quaternary matroid. We have seen two cases in which it is: when the matroid is also ternary, and when deleting one element yields a binary matroid. It seems likely that if the matroid is vertically $4$-connected, then these are the only two possibilities:

\begin{conjecture} \label{conj:vertically4connected}
If $M$ is a simple, vertically $4$-connected matroid with a modular $4$-point line, $L$, then either
\begin{enumerate}[(i)]
 \item $M$ is ternary,
 \item for some $e \in L$, $M \d e$ is binary, or
 \item $M$ has an $F_7$-minor and either an $F_7^-$- or $(F_7^-)^*$-minor.
\end{enumerate} 
\end{conjecture}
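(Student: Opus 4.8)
The plan is to bootstrap from \autoref{thm:4ptlinetheorem}. A vertically $4$-connected matroid is $3$-connected, so \autoref{thm:4ptlinetheorem} applies to $M$: either $M$ is ternary, or $M$ is quaternary, or $M$ has an $F_7$-minor together with an $F_7^-$- or $(F_7^-)^*$-minor. In the first case $M$ satisfies outcome~(i) of the conjecture, and in the last case it satisfies outcome~(iii); moreover, since $F_7^-$ and $(F_7^-)^*$ are not quaternary, a matroid falling into the last case is itself not quaternary, so that case is disjoint from the quaternary one. Hence it suffices to treat the case in which $M$ is quaternary and not ternary, and to prove outcome~(ii): that $M\d e$ is binary for some $e\in L$.

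To prove outcome~(ii) I would invoke \autoref{thm:seymourmodulartriangle}. Since $M$ is simple and $L$ is a $4$-point line, $L\d e$ is a $3$-point line of $M\d e$ for every $e\in L$, so if $M\d e$ is $3$-connected and $L\d e$ is modular in $M\d e$, then $M\d e$ is binary. In the converse direction, in a $3$-connected binary matroid every $3$-point line is modular (since circuits and cocircuits meet in an even number of elements, no cocircuit contains a triangle, so the triangle meets every hyperplane). Thus, modulo checking that $M\d e$ is $3$-connected --- this is the single point at which the vertical $4$-connectivity of $M$, rather than the mere $3$-connectivity used in \autoref{thm:4ptlinetheorem}, enters, and it needs a routine but slightly fiddly analysis of separations near $L$ --- proving outcome~(ii) is \emph{equivalent} to finding $e\in L$ with $L\d e$ modular in $M\d e$.

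The next step is a dichotomy. Because $L$ is modular in $M$, every hyperplane of $M$ either contains $L$ or meets it in a single point, and since $M$ is $3$-connected and $L$ is modular, no cocircuit of $M$ is contained in $L$ (hence no cocircuit of $M\d e$ is contained in $L\d e$). If some $e\in L$ lies in \emph{no} hyperplane $H$ of $M$ with $H\cap L=\{e\}$, then closing up a hyperplane of $M\d e$ inside $M$ and using modularity of $L$ shows that $L\d e$ meets every hyperplane of $M\d e$; combined with the absence of a cocircuit inside $L\d e$, this makes $L\d e$ modular in $M\d e$ and we are done. So assume the \emph{bad case}: for every $e\in L$ there is a hyperplane $H_e$ of $M$ with $H_e\cap L=\{e\}$. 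In the bad case the four points of $L$ sit symmetrically in $M$ --- each is cut off from the other three by a hyperplane, while $L$ is modular --- and the whole problem reduces to showing this configuration is impossible in a quaternary, non-ternary matroid.

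I expect this last step to be where all the real difficulty lies; the reductions above are comparatively soft. To attack it I would fix the $\GF(4)$-representation of $M$, unique up to the usual equivalences by Kahn's theorem since $M$ is $3$-connected, and place $L$ at four of the five points $[1,0],[0,1],[1,1],[1,\w],[1,\w^2]$ of a line of $\PG(r(M)-1,4)$, with $[1,\w^2]$ the one left out. The aim would be to show that the four hyperplanes $H_e$ so constrain the representation that its entries can be re-coordinatised over $\GF(3)$, carrying $L$ onto the four points of a line of $\PG(r(M)-1,3)$ and contradicting non-ternarity; I would look to a confinement- or stabiliser-type argument from the theory of inequivalent representations, or to an analysis of the ``bridges'' of $L$ in the spirit of Seymour's proof of \autoref{thm:seymourmodulartriangle} but carrying the $\GF(4)$-structure along. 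Making this work in full generality is, I believe, exactly why the statement is only a conjecture.
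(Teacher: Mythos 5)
You are being asked to prove something the paper explicitly labels a \emph{conjecture}; the paper offers no proof of it (and indeed, it remains open as far as the paper indicates). So there is no ``paper's own proof'' against which to compare your proposal, and your proposal, which is candid about not finishing the argument, cannot be scored as a complete proof.

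That said, the reductions you carry out are correct and do useful work. Applying \autoref{thm:4ptlinetheorem} and observing that $F_7^-$ and $(F_7^-)^*$ are not $\GF(4)$-representable correctly isolates the quaternary-not-ternary case as the only one needing new work. Your equivalence ``$L\setminus\{e\}$ modular in $M\d e$ $\Leftrightarrow$ $M\d e$ binary'' (given $3$-connectivity of $M\d e$) is sound: the forward direction is \autoref{thm:seymourmodulartriangle}, and the converse follows from the facts that a rank-$2$ flat in a simple matroid is modular iff it meets every hyperplane and that in a binary matroid no cocircuit contains a triangle. Your dichotomy is also correct: if some $e\in L$ lies in no hyperplane $H$ with $H\cap L=\{e\}$, then for any hyperplane $H'$ of $M\d e$, taking $H=\cl_M(H')$ and using modularity of $L$ either gives a point of $L\setminus\{e\}$ in $H'$ (when $e\notin H$) or $L\subseteq H$ hence $L\setminus\{e\}\subseteq H'$ (when $e\in H$), so $L\setminus\{e\}$ is modular in $M\d e$. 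The remaining ``bad case,'' where each of the four points of $L$ is cut off by its own hyperplane, is where you stop, and you are right that this is where vertical $4$-connectivity must be used (the paper's generalized-parallel-connection construction shows the conjecture fails without it). But ``the whole problem reduces to showing this configuration is impossible'' followed by a sketch of a $\GF(4)$-representation/stabiliser argument is a program, not a proof: no argument is given that the four hyperplanes $H_e$ actually force a $\GF(3)$-coordinatisation, and this is precisely the content the conjecture asserts and the paper does not supply. In short, your reduction is a reasonable and correct framing of the problem, but the statement remains a conjecture both in the paper and after your proposal.
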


This is not the case when $M$ is $3$-connected but not vertically $4$-connected. We can construct other examples that are representable over $\GF(4)$ by taking a matroid $N$ that has a $4$-point line and is representable over both $\GF(3)$ and $\GF(4)$, picking any triangle $T$ of $N$, and taking the generalized parallel connection across $T$ of $N$ and a copy of $F_7$ (see \cite[Section 11.4]{Oxley} for the definition of the generalized parallel connection, which was introduced by Brylawski \cite{Brylawski}). Note that in this construction, $T$ may or may not be contained in the modular line.

It is possible that a variant of \autoref{conj:vertically4connected} holds even for modular lines with $q+1$ points when $q \leq 5$. That is, perhaps any simple, vertically $4$-connected representable matroid $M$ with a modular $(q+1)$-point line $L$ is either representable over $\GF(q)$ or has an element $e \in L$ such that $L \setminus \{e\}$ is a modular line of $M \d e$.
However, this does not hold for modular lines of seven points; one of the exceptions comes from the class of Dowling geometries.
Given a finite multiplicative group $G$, the rank-$3$ Dowling geometry over $G$, denoted $Q_3(G)$, can be defined as follows.
Its ground set is $\{a_1, a_2, a_3\} \cup G_1 \cup G_2 \cup G_3$, where $G_1, G_2$, and $G_3$ are disjoint copies of $G$; let $f_i : G_i \rightarrow G$ be an isomorphism for each $i = 1, 2, 3$. The matroid is simple and has rank $3$, so we define it by its collection of lines, which are
\begin{enumerate}
 \item $G_i \cup (\{a_1, a_2, a_3\} \setminus \{a_i\})$, for each $i = 1, 2, 3$,
 \item $\{a_i, g\}$, for each $i = 1, 2, 3$ and each $g \in G_i$, and
 \item $\{g_1, g_2, g_3\}$ for each $g_1 \in G_1$, $g_2 \in G_2$, and $g_3 \in G_3$ such that $f_1(g_1)f_2(g_2)f_3(g_3) = 1$.
\end{enumerate}
The matroid $Q_3(G)$ has three modular $(|G|+2)$-point lines, those of the first type in the above list. Dowling \cite{Dowling} showed that $Q_3(G)$ is representable over a field $\F$ if and only if $G$ is isomorphic to a subgroup of the multiplicative group of $\F$. 
So when $|G| = 5$, the smallest field over which $Q_3(G)$ is representable is $\GF(11)$, as $11$ is the smallest prime power that is one more than a multiple of $5$.

To end this introduction, we point out that a matroid satisfying the third outcome of \autoref{thm:4ptlinetheorem} and of \autoref{conj:vertically4connected} can be constructed on only ten elements. We start with a rank-$4$ \emph{binary spike} $N$; see \cite{Oxley} for the definition and discussion of spikes.
Let $t$ be the tip of $N$ and $C$ one of the $4$-element circuits that meets every leg. We let $M$ be the spike obtained from $N$ by relaxing the circuit-hyperplane $C$. Then for any leg $\{x, y\}$ with $y \in C$, $M / x \d y \cong F_7$ while $M / y \d x \cong F_7^-$. Finally, choosing a leg $\{a, b\}$ of $M$, we can construct a single-element extension of $M$ by adding an element $e$ that is in the closures of $\{t, a, b\}$ and of $C \setminus \{a, b\}$. This matroid has $\{t, a, b, e\}$ as a modular line.

\section{Duals of some small matroids}

We follow the notation of Oxley \cite{Oxley}.
In this section we present two lemmas about some matroids $N$ showing that, under certain conditions, a matroid has an $N$-minor if and only if it has an $N^*$-minor.
A proof of the first one can be found in \cite[Proposition 12.2.15]{Oxley}.

\begin{lemma}[Oxley, \cite{Oxley:acharacterization}] \label{lem:U25U35}
If $M$ is a $3$-connected matroid of rank and corank at least three, then $M$ has a $U_{2,5}$-minor if and only if $M$ has a $U_{3,5}$-minor.
\end{lemma}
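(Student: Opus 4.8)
The plan is to reduce to a single implication by duality, extract a convenient minor with Seymour's Splitter Theorem, and finish with a short geometric argument in rank three. Since $U_{3,5} = U_{2,5}^*$ and $M^*$ is again $3$-connected with rank and corank at least three, it suffices to prove that a $U_{2,5}$-minor implies a $U_{3,5}$-minor; the reverse implication then follows by applying this to $M^*$.

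So assume $M$ has a $U_{2,5}$-minor, and let $n \ge 5$ be largest such that $U_{2,n}$ is a minor of $M$. Since $r(M) \ge 3$ we have $M \ne U_{2,n}$, and $U_{2,n}$ is neither a wheel nor a whirl, so the Splitter Theorem (see~\cite{Oxley}) yields a $3$-connected minor $M_1$ of $M$ on $n+1$ elements with $M_1 \d f \cong U_{2,n}$ or $M_1 / e \cong U_{2,n}$ for some element. A proper single-element extension of a line is either a longer line or not $3$-connected, so $M_1 \d f \cong U_{2,n}$ would force $M_1 \cong U_{2,n+1}$, contradicting the maximality of $n$. Hence $M_1 / e \cong U_{2,n}$; as $M_1$ is loopless this gives $r(M_1) = 3$, and since $M_1/e$ is simple, $e$ lies on no $3$-point line of $M_1$.

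It then remains to show that such an $M_1$ has a $U_{3,5}$-restriction, for then $M$ has a $U_{3,5}$-minor. Here $M_1$ is simple (being $3$-connected with $n+1 \ge 6$ elements); write its ground set as $\{e\} \cup \{p_1, \dots, p_n\}$. Because $\cl_{M_1}(\{e, p_i\}) = \{e, p_i\}$ for each $i$, any three of these elements that include $e$ have rank $3$, so it is enough to find four of the $p_i$ with no three collinear. Suppose none exist. As $n \ge 5$, some line $L$ contains a maximum number $m \ge 3$ of the $p_i$, and $3$-connectivity forces $m \le n-2$: if all $n$ of them lay on $L$ then $e$ would be a coloop, and if exactly $n-1$ did then $L$ and the remaining two elements would be a $2$-separation. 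Pick distinct $q_1, q_2 \in \{p_1, \dots, p_n\} \setminus L$ and set $\ell = \cl_{M_1}(\{q_1, q_2\})$; then $\ell \ne L$, so $|\ell \cap L| \le 1$. For any distinct $a, b \in L$, the set $\{q_1, q_2, a, b\}$ contains three collinear elements; this triple cannot be $\{q_i, a, b\}$ (which would put $q_i \in \cl(\{a, b\}) = L$), so it is $\{q_1, q_2, a\}$ or $\{q_1, q_2, b\}$, forcing $a \in \ell$ or $b \in \ell$. Thus at most one element of $L$ lies off $\ell$, so $|\ell \cap L| \ge m-1 \ge 2$, a contradiction; hence four of the $p_i$ lie in general position, and together with $e$ they span a $U_{3,5}$-restriction.

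The Splitter Theorem reduction is routine; the substance is the last paragraph — the rank-$3$ fact that a $3$-connected matroid with an element on no $3$-point line (a coextension of a long line) must contain five points in general position. The point requiring care there is verifying that the configurations used to bound $m$ genuinely are a $2$-separation or a coloop, and that the case analysis of which triple inside $\{q_1, q_2, a, b\}$ is collinear is exhaustive.
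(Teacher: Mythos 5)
The paper does not prove this lemma; it cites Oxley's book (Proposition 12.2.15), so there is no in-paper argument to compare against. Your proof is correct and self-contained. The duality reduction is standard, and the Splitter Theorem step is applied correctly (the wheel/whirl hypothesis is satisfied since $U_{2,n}$ for $n \geq 5$ is neither; the extension case $M_1 \d f \cong U_{2,n}$ forces $M_1 \cong U_{2,n+1}$ because a simple rank-$2$ matroid on $n+1$ elements is a line, contradicting maximality of $n$). This leaves the rank-$3$, $(n+1)$-element matroid $M_1$ in which $e$ lies on no $3$-point line, and your finishing argument is a nice counting/incidence argument: if no four of the $p_i$ were in general position, some line $L$ would carry $m$ of them with $3 \leq m \leq n-2$ (the bounds are justified correctly via coloop and $2$-separation), and then the line through two points off $L$ would be forced to contain at least $m-1 \geq 2$ points of $L$, contradicting that two distinct lines meet in at most one point. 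Finally, any four $p_i$ in general position together with $e$ do form a $U_{3,5}$-restriction because $e$ on no $3$-point line makes every triple through $e$ independent. Whether this matches Oxley's own proof I cannot say from the paper alone, but as a blind derivation this is a clean, correct argument built on the Splitter Theorem plus an elementary rank-$3$ incidence lemma, which is a perfectly good route.
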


Next, we look at $F_7$- and $F_7^*$-minors. 
We need this corollary of the Splitter Theorem of Seymour (see \cite[Lemma 12.3.11]{Oxley}).

\begin{lemma} \label{lem:deletablepoint}
If $M$ is a $3$-connected matroid with a $3$-connected minor $N$, $|E(N)| \geq 4$, and $r(M) > r(N)$, then $M$ has an element $e$ such that $\si(M / e)$ is $3$-connected and has an $N$-minor.
\end{lemma}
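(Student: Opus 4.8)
The plan is to argue by induction on $|E(M)|$ (equivalently, to examine a counterexample with $|E(M)|$ minimum), the main tool being the Splitter Theorem. Since $r(M) > r(N)$ we have $M \neq N$, so $E(M) \neq E(N)$ and $|E(M)| \geq |E(N)| + 1 \geq 5$. The Splitter Theorem carries the restriction that $N$ is not a wheel, or that $M$ has no larger wheel minor (and similarly for whirls), so I would first dispose of this by replacing $N$, when it is a wheel or whirl, with a wheel (resp.\ whirl) minor $\hat N$ of $M$ of largest rank; then $\hat N$ is $3$-connected, has an $N$-minor and at least four elements, and $M$ has no wheel (whirl) minor properly containing $\hat N$. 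If $E(M) = E(\hat N)$ then $M$ is itself a wheel or whirl of rank greater than $r(N)$, and contracting a rim element and simplifying yields a smaller wheel (whirl) that still has an $N$-minor, so the conclusion holds outright. Hence we may assume $E(M) \neq E(\hat N)$ and apply the Splitter Theorem to $M$ and $\hat N$, obtaining an element $e$ with $M \d e$ or $M/e$ $3$-connected and having an $\hat N$-minor, hence an $N$-minor.

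If $M/e$ is such an element we are done: $M/e$ is $3$-connected on at least $|E(N)| \geq 4$ elements, hence simple, so $\si(M/e) = M/e$ is $3$-connected with an $N$-minor. Otherwise $M \d e$ is $3$-connected with an $N$-minor; since $M$ is $3$-connected on at least four elements it has no coloops, so $r(M\d e) = r(M) > r(N)$ while $|E(M\d e)| < |E(M)|$. Thus $M \d e$ satisfies the hypotheses of the lemma with the same minor $N$, and by the inductive hypothesis there is $f \in E(M\d e)$ such that $\si((M\d e)/f)$ is $3$-connected and has an $N$-minor.

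The crux is to transfer this back to $M$, i.e.\ to show $\si(M/f)$ is $3$-connected with an $N$-minor. Put $P = M/f$; then $(M\d e)/f = P\d e$, so $\si(P\d e)$ is $3$-connected with an $N$-minor, and $P$ is the single-element extension of $P\d e$ obtained by adding $e$ back. Consequently $\si(P)$ is isomorphic either to $\si(P\d e)$ itself (when $e$ is a loop of $P$ or parallel to another element of $P$) or to a single-element extension of $\si(P\d e)$; in either case $\si(P\d e)$ is (isomorphic to) a minor of $\si(P)$, so $\si(P)$ inherits the $N$-minor. The key observation is that $e$ cannot be a coloop of $P = M/f$, because $M$, being $3$-connected on at least four elements, has no cocircuit of size at most two; it follows that in the extension case the new point is not a coloop of $\si(P)$ either. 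A routine connectivity computation then shows that a single-element extension of a $3$-connected matroid on at least four elements that does not create a coloop is again $3$-connected, so $\si(P) = \si(M/f)$ is $3$-connected with an $N$-minor, contradicting the minimality of the counterexample.

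I expect this last transfer step to be the main difficulty: keeping the simplification under control when passing among $M$, $M\d e$, $M/f$ and $(M/f)\d e$, and in particular checking that no coloop is introduced so that $3$-connectedness survives adding $e$ back. The wheel/whirl escape clause of the Splitter Theorem is a lesser nuisance, handled by the preliminary reduction to a maximal wheel or whirl minor together with the separate check when $M$ is itself a wheel or whirl. (Alternatively, one may simply invoke \cite[Lemma~12.3.11]{Oxley}, which is exactly this statement.)
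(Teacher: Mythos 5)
The paper does not prove this lemma itself; it simply invokes it, pointing to \cite[Lemma~12.3.11]{Oxley}, so there is no in-paper argument to compare against. Your proof via the Splitter Theorem --- first reducing, if $N$ is a wheel or whirl, to a largest such minor $\hat N$ and treating the case $M = \hat N$ directly, then inducting on $|E(M)|$ through $M \d e$, and finally transferring back to $\si(M/f)$ by noting that $e$ cannot be a coloop of $M/f$ (else $\{e\}$ would be a cocircuit of $M$) and that a simple single-element non-coloop extension of a $3$-connected matroid on at least four elements is again $3$-connected --- is correct and is in substance the standard argument behind the cited result.
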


Seymour \cite{Seymour:decomposition} proved that any $3$-connected binary matroid with an $F_7$-minor is either a copy of $F_7$ or has an $F_7^*$-minor (see also \cite[Proposition~12.2.3]{Oxley}). We generalize this fact from binary matroids to matroids with no $U_{2,5}$-minor. This lemma implies that $F_7$ is a splitter for the class of matroids with no $U_{2,5}$- or $F_7^*$-minor.

\begin{lemma} \label{lem:F7F7*}
If $M$ is a $3$-connected matroid with an $F_7$-minor and no $U_{2,5}$-minor, then either $M \cong F_7$ or $M$ has an $F_7^*$-minor.
\end{lemma}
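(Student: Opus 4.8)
The plan is to induct on $|E(M)|$, using Lemma~\ref{lem:deletablepoint} to pass to a smaller minor while controlling connectivity and the excluded minor $U_{2,5}$. So suppose $M$ is a $3$-connected matroid with an $F_7$-minor and no $U_{2,5}$-minor, and assume $M \not\cong F_7$; we want an $F_7^*$-minor. Since $M$ has no $U_{2,5}$-minor and no $U_{3,5}$-minor (Lemma~\ref{lem:U25U35} rules out the latter once $r(M), r^*(M) \ge 3$, and both ranks are at least $3$ because $M$ properly contains $F_7$ which is self-dual of rank $3$), every line of $M$ has at most four points; in fact I would first argue that $M$ has no $U_{2,4}$-minor either, because a $3$-connected matroid with a $U_{2,4}$-minor and an $F_7$-minor would have a $U_{2,5}$- or $U_{3,5}$-minor — here one can invoke that $U_{2,4}$ is not a splitter together with roundedness results, or more simply note $F_7$ has no $U_{2,4}$-restriction and chase the $3$-connected minor structure. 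Granting that $M$ is in fact binary (a $3$-connected matroid with no $U_{2,4}$-minor is binary), the result is exactly Seymour's theorem cited just above the statement, so the real content is showing the $U_{2,5}$-free hypothesis already forces binarity in the presence of an $F_7$-minor.

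Concretely, the key steps I would carry out are: (1) Reduce ranks: observe $r(M) \ge 3$ and $r^*(M) \ge 3$ (if $r^*(M) = 2$ then $M$ is a line, contradicting the $F_7$-minor), so Lemma~\ref{lem:U25U35} applies and $M$ has no $U_{3,5}$-minor. (2) Show $M$ has no $U_{2,4}$-minor: if it did, a standard roundedness/splitter argument (e.g. extending a $U_{2,4}$-restriction of a suitable $3$-connected minor) would produce either $U_{2,5}$ or $U_{3,5}$, contradiction; I would state this carefully since it is the crux. (3) Conclude $M$ is binary by Tutte's excluded-minor characterization ($3$-connected and no $U_{2,4}$-minor implies binary — actually just no $U_{2,4}$-minor suffices, connectivity not even needed). (4) Apply Seymour's theorem that a $3$-connected binary matroid with an $F_7$-minor is $F_7$ itself or has an $F_7^*$-minor; since $M \not\cong F_7$, it has an $F_7^*$-minor, as desired.

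An alternative, more self-contained route avoiding the appeal to step (2)'s roundedness argument is to induct directly: if $r(M) > 3$, apply Lemma~\ref{lem:deletablepoint} with $N = F_7$ to get an element $e$ with $\si(M/e)$ $3$-connected, having an $F_7$-minor, smaller, and still $U_{2,5}$-minor-free (simplification and contraction preserve having no $U_{2,5}$-minor); by induction $\si(M/e) \cong F_7$ or has an $F_7^*$-minor, and in the latter case so does $M$. This reduces to the case $r(M) = 3$, i.e. $M$ is a rank-$3$ simple matroid (a restriction of a projective-plane-like geometry) containing the Fano plane $F_7$ as a spanning restriction; then one argues by hand that such an $M$ on more than $7$ points either contains $U_{2,5}$ (too many points on some line through the new element, using modularity of lines in $F_7$) or has $F_7^*$ as a minor. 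Dually one handles $r^*(M) = 3$. The base case $r(M) = r^*(M) = 3$ forces $|E(M)|$ bounded, finishing by inspection.

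The main obstacle I expect is step (2) — ruling out $U_{2,4}$-minors cleanly — equivalently, in the inductive route, the rank-$3$ base case analysis: showing that you cannot add points to $F_7$ inside a rank-$3$ $U_{2,5}$-free geometry without creating an $F_7^*$-minor. The tension is that $F_7$, being a projective plane, is "full" in rank $3$ — every line already has three points and every point-pair lies on a line — so any new element lies on lines that then acquire four points, and a careful count of how these four-point lines interact (using that $M$ has no $U_{3,5}$-minor, hence no three four-point lines through a common point, or similar) should be what forces either the $U_{2,5}$-minor or enough structure to extract $F_7^*$. Making that combinatorial argument airtight, rather than the high-level reduction, is where the work lies.
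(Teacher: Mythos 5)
Your Route A cannot work: the intermediate claim in step (2) is false, not just hard. The paper's own example in the introduction is a counterexample. Take a rank-$4$ binary spike and relax one of its transversal circuit-hyperplanes to obtain a $3$-connected matroid $M$. Then $M$ has an $F_7$-minor and also an $F_7^-$-minor, hence a $U_{2,4}$-minor, so $M$ is nonbinary. But a direct check of all rank-$2$ contractions $M/\{f,g\}$ shows that $\si(M/\{f,g\})$ always has at most four points, so $M$ has no $U_{2,5}$-minor (and by Lemma~\ref{lem:U25U35}, no $U_{3,5}$-minor either). So $F_7$-minor, $U_{2,4}$-minor, and $\{U_{2,5},U_{3,5}\}$-freeness can coexist in a $3$-connected matroid, and you cannot deduce binarity and hand off to Seymour. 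Of course, this $M$ \emph{does} have an $F_7^*$-minor, which is exactly what the lemma promises; the lemma is genuinely stronger than "reduce to the binary case." Separately, $F_7$ is not self-dual as a matroid (it has rank $3$ and corank $4$), though the conclusion $r(M),r^*(M)\geq 3$ you draw from it is still correct.

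Your Route B is closer to the paper's argument but has a real gap at the point where the induction returns $\si(M/e)\cong F_7$. In that case $M$ has rank $4$, not rank $3$, so you are not in the base case you describe. Almost all of the paper's work is devoted to exactly this situation: Claim~(\ref{clm:F7F7*-ext}) shows that any proper rank-$3$ extension of $F_7$ has a $U_{2,5}$-minor (this is essentially your base case, and it is correct and easy); Claim~(\ref{clm:F7F7*-coext}) then uses Lemma~\ref{lem:deletablepoint} to produce a well-behaved rank-$4$ minor $N$ with $N/e\cong F_7$ where $e$ is neither a coloop nor in a series pair (the careful re-choosing of $N$ there is precisely to avoid the degenerate coextensions); Claim~(\ref{clm:F7F7*-onlylineislifted}) finds a line of $F_7$ that lifts to a circuit of $N$; and the remaining case analysis extracts either $F_7^*$ or $U_{2,5}$. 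Your sketch says "this reduces to the case $r(M)=3$... finishing by inspection," but the reduction never reaches rank $3$ once an $\si(M/e)\cong F_7$ appears, and the rank-$4$ coextension analysis is the actual content of the proof. The dual discussion of $r^*(M)=3$ does not rescue this, since deleting down in corank runs into the symmetric problem.
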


\begin{proof}
We let $M$ be a $3$-connected matroid with an $F_7$-minor and no $U_{2,5}$-minor. \autoref{lem:U25U35} implies that $M$ also has no $U_{3,5}$-minor.

\begin{claim} \label{clm:F7F7*-ext}
 If $N$ is a simple rank-$3$ matroid with $e \in E(N)$ such that $N \d e = F_7$, then $N$ has a $U_{2,5}$-minor.
\end{claim}

Any two lines of $F_7$ intersect in a point. Thus since $e$ is not parallel to an element of $F_7$, $e$ lies in the closure in $N$ of at most one line of $F_7$. Therefore, $N / e$ has at most one parallel class of size greater than one. Hence it is a rank-$2$ matroid with at least $|E(F_7)| - 2 = 5$ points. This proves (\ref{clm:F7F7*-ext}).

\begin{claim} \label{clm:F7F7*-coext}
 $M$ has a minor $N$ with an element $e \in E(N)$ such that $N / e \cong F_7$, and $e$ is neither a coloop nor in a series pair of $N$.
\end{claim}

If $r(M) = 3$ then $M \cong F_7$ by (\ref{clm:F7F7*-ext}), so we may assume that $r(M) > 3$.
It follows from \autoref{lem:deletablepoint} that we can repeatedly contract elements of $M$ and simplify to get a $3$-connected rank-$4$ minor $N'$ and an element $e \in E(N')$ such that $N' / e$ has an $F_7$-restriction. However, by (\ref{clm:F7F7*-ext}) any simple rank-$3$ extension of $F_7$ has a $U_{2,5}$-minor; thus $\si(N' / e) \cong F_7$. We choose $N$ to be a restriction of $N'$ containing $e$ and exactly one element from each parallel class of $N' / e$. We claim that we can choose $N$ so that $e$ is neither a coloop nor in a series pair of $N$.

Suppose that $e$ is a coloop of $N$; then since $N'$ is $3$-connected, there is an element $f \in E(N') \setminus E(N)$. By our choice of $N$, $f$ is in a triangle with $e$ and some element $g \in E(N)$.
The set $E(N) \setminus \{e, g\}$ contains a basis of $N / e \cong F_7$ so it has rank three in $N'$. Since $\{e, f, g\}$ is a triangle and $e$ is a coloop of $N$, $f \not\in \cl_{N'}(E(N) \setminus \{e\})$.
Thus we could have chosen the restriction $N' | ((E(N) \setminus \{g\}) \cup \{f\})$ in place of $N$, as $e$ is not a coloop in it. It does, however, have $\{e, f\}$ as a series pair. So we can choose $N$ so that $e$ is not a coloop.

Similarly, suppose that for some $h \in E(N)$, $\{e, h\}$ is a series pair of $N$. Since $N / e$ is cosimple, this is the unique series pair. Since $N'$ is $3$-connected there are elements $g \in E(N) \setminus \{h\}$ and $f \in E(N') \setminus E(N)$ such that $\{e, f, g\}$ is a triangle of $N'$. 
Since $E(N) \setminus \{e, h, g\}$ contains a basis of $N / e$, it has rank three in $N'$. So its closure contains $g$, but not $f$.
Then we can choose $N' | ((E(N) \setminus \{g\}) \cup \{f\})$ in place of $N$; $\{e, h, f\}$ is a triad in it so $e$ is neither a coloop nor in a series pair. This proves (\ref{clm:F7F7*-coext}).
\\

We let $N$ be the minor of $M$ as in (\ref{clm:F7F7*-coext}). 

\begin{claim} \label{clm:F7F7*-onlylineislifted}
 There is a $4$-element circuit $C$ of $N / e$ that is also a circuit in $N$.
\end{claim}

Suppose not; then for every $4$-element circuit $C$ of $N / e \cong F_7$, $r_N(C) = 4$. Let $L_1$ and $L_2$ be two distinct lines of $N / e$. If $r_N(L_1) = r_N(L_2) = 2$, then $(L_1 \cup L_2) \setminus (L_1 \cap L_2)$ is a $4$-element circuit of $N$, a contradiction. Hence there is at most one line $L$ of $N / e$ that is also a circuit of $N$. 
Therefore, for some $f \in E(N)$, $N \d f$ has no triangles.
If $X$ is a $4$-element circuit of $N$ and $e \not\in X$, then by assumption $X$ is not a circuit of $N / e$; but then $e \in \cl_N(X)$ and $r_{N/e}(X) = 2$, a contradiction because lines of $F_7$ have only three elements.
Hence every $4$-element circuit of $N$ contains $e$.
Thus $N \d e, f$ has no circuits of size less than five, so $N \d e, f \cong U_{4, 6}$. Then $M$ has a $U_{3, 5}$-minor, a contradiction. This proves (\ref{clm:F7F7*-onlylineislifted}).
\\

We let $C$ be the circuit as in (\ref{clm:F7F7*-onlylineislifted}). We note that $E(N) \setminus C$ is a triangle of $N / e$; we denote its elements by $\{f, g, h\}$. 
Suppose that $f \in \cl_N(C)$. Since $e$ is not a coloop or in a series pair, $\{e, g, h\}$ is a triad of $N$. 
If $f \in \cl_N(\{g, h\})$, then $N \d f \cong F_7^*$, and we are done. 
On the other hand, if $f \not\in \cl_N(\{g, h\})$, then $\{e, f, h\}$ is the unique triangle of $N / g$ containing $h$; hence $N / g, h \d e \cong U_{2, 5}$, a contradiction.
Therefore, we may assume that $f \not\in \cl_N(C)$, and by symmetry also that $g, h \not\in \cl_N(C)$, so $\{e, f, g, h\}$ is a cocircuit of $N$.

Each element $x \in \{f, g, h\}$ is in exactly two lines of $(N / e) | (C \cup \{x\})$; we denote them by $L_1(x)$ and $L_2(x)$.
If $\{g, h\} \cup (L_1(f) \setminus \{f\})$ and $\{g, h\} \cup (L_2(f) \setminus \{f\})$ are both circuits of $N$, then $N | (C \cup \{e, g, h\}) \cong F_7^*$. So we may assume that $\{g, h\} \cup (L_i(f) \setminus \{f\})$ is independent in $N$ for some $i \in \{1, 2\}$.
Similarly, $\{f, g\} \cup (L_j(h) \setminus \{h\})$ is independent in $N$ for some $j \in \{1, 2\}$. There is a unique element $z \in L_i(f) \cap L_j(h)$. 
It is not in $\cl_N(\{e, f, g, h\})$ so $\{f, h\}$ is independent in $N / g, z$.
The matroid $N / g, z \d e$ is isomorphic to $U_{2, 5}$, contradicting the fact that $M$ has no $U_{2,5}$-minor.
\end{proof}

\section{Contracting a minor onto a modular line}

In this section, we look at $3$-connected matroids that have both a modular line and an $N$-minor, for any given $3$-connected matroid $N$. We show that there are finitely many minor-minimal such matroids. In fact, we show that each of them has rank at most $r(N) + 2$.

Let $W$ be a modular set in a matroid $M$.
The following two properties are straightforward and we will use them freely throughout the rest of the paper.
\begin{itemize}
 \item For any $e \in E(M) \setminus W$, $W$ is modular in $M \d e$, and if $e \not\in \cl_M(W)$ then $W$ is modular in $M / e$. Equivalently, $W$ is modular in any minor of $M$ that has $M | W$ as a restriction.
 \item If $e \in \cl_M(W) \setminus W$, then either $e$ is a loop or $e$ is parallel to an element of $W$.
\end{itemize}

In a simple matroid $M$, every modular set is a flat (if $X$ is a modular set that is not closed, then $X$ along with any point in $\cl_M(X) \setminus X$ violates the definition of modularity).
Modularity is often defined only for flats. The reason that we have defined it for arbitrary sets is so that the first property above holds --- a modular flat $W$ in a matroid $M$ remains a modular set in any minor with $M | W$ as a restriction, even though $W$ may not be a flat of such a minor.

We define the \defn{connectivity function}, $\lambda_M$, on subsets of the ground set of a matroid $M$ by
\[ \lambda_M(X) = r_M(X) + r_M(E(M) \setminus X) - r(M). \]
The \defn{local connectivity} of two sets $S, T \subseteq E(M)$ is defined to be
\[ \sqcap_M(S, T) = r_M(S) + r_M(T) - r_M(S \cup T). \]
Finally, when $S$ and $T$ are disjoint subsets of $E(M)$, we define
\[ \kappa_M(S, T) = \min\{\lambda_M(A) : S \subseteq A \subseteq E(M) \setminus T\}. \]

We make use of the following useful theorem of Tutte \cite{Tutte}, which can be thought of as a matroid generalization of Menger's Theorem.

\begin{tutteslinkingtheorem}
Let $M$ be a matroid. If $S, T \subseteq E(M)$ are disjoint, then for each $e \in E(M) \setminus (S \cup T)$, either
\begin{itemize}
 \item $\kappa_{M \d e}(S, T) = \kappa_M(S, T)$, or
 \item $\kappa_{M / e}(S, T) = \kappa_M(S, T)$.
\end{itemize}
\end{tutteslinkingtheorem}

The next technical lemma will be used extensively in later sections. When a $3$-connected matroid $M$ has a modular line $L$ and a minor $N$, our objective is to find a smallest possible minor of $M$ that contains the line $L$, has an $N$-minor, and is $3$-connected.

\begin{lemma} \label{lem:squishedminor}
 Let $M$ be a simple, $3$-connected matroid with a modular line $L$ such that $|L| \geq 4$. If $N_0$ is a simple, $3$-connected minor of $M$,
 then there is a minor $N'$ of $M$ such that
 \begin{enumerate}[(i)]   
  \item $N'$ has an $N_0$-minor, $N$, \label{out:first}
  \item $E(N') = L \cup E(N)$,
  \item $M | L$ is a restriction of $N'$, and \label{out:third}
  \item $N'$ is $3$-connected. \label{out:fourth}
 \end{enumerate}
\end{lemma}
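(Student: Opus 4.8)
The plan is to choose $N'$ to be a minor of $M$ with as few elements as possible subject to having $M|L$ as a restriction and having an $N_0$-minor; $M$ itself is such a minor, so $N'$ exists. Fix an $N_0$-minor $N=N'/C\setminus D$ of $N'$ (with $C,D$ disjoint and $C\cup D=E(N')\setminus E(N)$), chosen so that $|(C\cup D)\setminus L|$ is minimum.

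I would first settle outcomes (i)--(iii). The matroid $N'$ is simple: a loop lies outside the simple restriction $M|L$ and can be deleted, while if $x$ and $y$ are parallel then, taking $x\notin L$, the minor $N'\setminus x$ still has an $N_0$-minor (re-routing through $y$ when $N$ uses $x$), and in either case minimality is contradicted. Since $N'$ is simple, the modular set $L$ is a flat, so $\cl_{N'}(L)=L$. If some $x\in(C\cup D)\setminus L$ existed, then deleting $x$ (when $x\in D$) or contracting $x$ (when $x\in C$; here $x\notin\cl_{N'}(L)$, so $L$ remains modular and $M|L$ remains a restriction of $N'/x$, which still has an $N_0$-minor) would give a strictly smaller minor with the two required properties. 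Hence $C\cup D\subseteq L$, so $E(N')=L\cup E(N)$, and moreover $N$ is obtained from $N'$ by contractions and deletions carried out entirely inside the rank-$2$ flat $L$.

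The content of the lemma is outcome (iv). Suppose $N'$ were not $3$-connected, and fix a $k$-separation $(A,B)$ with $k\le 2$, so $\lambda_{N'}(A)\le 1$ and $|A|,|B|\ge k$. First I would show $L$ lies entirely in $A$ or in $B$: since $|L|\ge 4$, one of $A,B$ meets $L$ in at least two elements, say $|L\cap A|\ge 2$, and then modularity gives $\sqcap_{N'}(L,\cl_{N'}(A))=r_{N'}(L\cap\cl_{N'}(A))=2$, so $L\subseteq\cl_{N'}(A)$; one can then move the elements of $L\cap B$ into $A$ without increasing $\lambda$, and the side-of-size-one obstructions that could arise are ruled out because they would make some element a loop or a coloop, which is impossible ($N'$ is simple, and a coloop of $N'$ would be a coloop of $N\cong N_0$). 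So assume $L\subseteq A$. Then $B\subseteq E(N')\setminus L=E(N)\setminus L$ is disjoint from $C\cup D$, so $\lambda_N(B)\le\lambda_{N'}(B)=\lambda_{N'}(A)\le 1$; as $N$ is $3$-connected and $|B|\ge k$, this forces $|E(N)\setminus B|\le 1$, and hence $A=L$ or $A=L\cup\{a\}$ for a single element $a\in E(N)\setminus L$.

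I expect this last situation to be the main obstacle: $N'$ is, up to one element, the line $L$ attached to the copy $N$ of $N_0$ across a connection of order at most $2$, and here minimality is of no help, since no element of $N'$ can be removed without destroying the restriction $M|L$ (we cannot touch $L$) or the $N_0$-minor (we cannot shrink $N$). To eliminate it I would work back in $M$: since $M$ is $3$-connected with $E(M)\ne L$ and $L$ is modular, $L\subseteq\cl_M(E(M)\setminus L)$, and more generally the modularity of $L$ pins down $\sqcap_M(L,\cdot)$ along any sequence of deletions and contractions leading from $M$ to $N'$; combining this with Tutte's Linking Theorem to keep track of the order of linkage between $L$ and the elements underlying the $N_0$-minor, one shows that $L$ is joined to $E(N)$ in $N'$ with a connection of order at least $2$, contradicting $\lambda_{N'}(A)\le 1$. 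Once the degenerate situation is excluded, $N'$ has no $k$-separation with $k\le 2$, so it is $3$-connected, as required.
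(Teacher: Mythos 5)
Your framing differs from the paper's: you take $N'$ to be an overall-minimum minor having $M|L$ as a restriction and an $N_0$-minor, and then locate $N$ inside it, whereas the paper first fixes $N = M/C \d D$ with $|C|$ minimum (and, subject to that, $|(E(N)\cup C)\cap L|$ maximum), applies Tutte's Linking Theorem to strip $M$ down to an intermediate minor $M'$ in which $\kappa_{M'}(E(N)\cup C, T)=2$ is preserved for a two-element $T\subseteq L$, and only then extracts $N'$. Your reductions for (i)--(iii) are sound: simplicity of $N'$ forces $L$ to be a flat, whence $C\cup D\subseteq L$ and $E(N')=L\cup E(N)$, and your reduction of an alleged $k$-separation $(A,B)$ with $k\le 2$ to the shape $A\in\{L,\,L\cup\{a\}\}$ is also correct and mirrors the end of the paper's argument.

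The genuine gap is in the step you label the ``main obstacle.'' What must be shown there is exactly the paper's Claim (\ref{clm:squishedminor-claim}): that $L\subseteq\cl_{N'}(E(N))$, equivalently $\sqcap_{N'}(E(N),L)=2$, which is what rules out $\lambda_{N'}(L)\le 1$ and $\lambda_{N'}(L\cup\{a\})\le 1$. Your proposed route --- ``working back in $M$'' and invoking Tutte's Linking Theorem to conclude that $L$ is joined to $E(N)$ with connectivity at least $2$ in $N'$ --- does not go through, because Tutte's Linking Theorem only asserts the \emph{existence} of some deletion/contraction sequence preserving $\kappa_M(S,T)$; it gives no information about the particular minor $N'$ you selected by minimality. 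Nothing about minimizing $|E(N')|$ forces the stripping that produced $N'$ to be linkage-preserving, so a priori the smallest minor with the required restriction and $N_0$-minor could be one in which $L$ is attached to $E(N)$ by a connection of order $\le 1$ --- precisely the separation your reduction detects. The paper avoids this by building $N'$ \emph{after} the Linking Theorem step, so that $L\subseteq\cl_{M'}(E(N)\cup C)$ is guaranteed by construction, and then exploits the minimality of $|C|$ and maximality of $|(E(N)\cup C)\cap L|$ to push this down to $L\subseteq\cl_{N'}(E(N))$. To repair your proof you would need either to show that your minimal $N'$ can be reached by a linkage-preserving sequence (not automatic), or to replace the global minimality of $|E(N')|$ by the paper's explicit construction.
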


\begin{proof}
 If $|E(N_0)| \leq 2$ then $N' = M|L$ works, so we may assume that $|E(N_0)| > 2$.
 For each $N_0$-minor, $N$, of $M$, there is a set $C \subset E(M)$ such that $M / C$ has $N$ as a restriction. We choose an $N_0$-minor $N$ of $M$ and corresponding set $C$ such that $|C|$ is minimum, and subject to this, such that $|(E(N) \cup C) \cap L|$ is maximum.
 Note that $C$ is independent, and setting $D = E(M) \setminus (E(N) \cup C)$, we have $N = M / C \d D$.
 
 If $|E(N) \cap L| \geq 2$, then since $N$ is simple, $C \cap L = \emptyset$ and furthermore $M|L$ is a restriction of $M / C$. We set $N' = M / C \d (D \setminus L)$. We note that any parallel pair in $N'$ contains one element each of $E(N)$ and $L$, so our maximum choice of $|E(N) \cap L|$ implies that $N'$ is simple. Moreover, $N'$ is $3$-connected because it has $N$ as a spanning restriction. Hence $N'$ is our desired minor.
 
 Thus we may assume that $|E(N) \cap L| \leq 1$. Since $C$ is independent, $|C \cap L| \leq 2$. Moreover, if $|C \cap L| = 2$ then the elements of $L \setminus C$ are loops of $M / C$ and are thus contained in $D$ because $N$ is simple. Therefore, $|(E(N) \cup C) \cap L| \leq 2$, and we may choose a set $T \subseteq L \setminus (E(N) \cup C)$ of size two.
 
 We let $S = E(N) \cup C$. The sets $S, T \subset E(M)$ are disjoint and each has size at least two. Thus as $M$ is $3$-connected, $\kappa_M(S, T) = \lambda_M(T) = 2$.
 Suppose that $e$ is an element of $E(M) \setminus (\cl_M(S) \cup \cl_M(T))$. By Tutte's Linking Theorem, at least one of $\kappa_{M \d e}(S, T)$ and $\kappa_{M / e}(S, T)$ is equal to two. Moreover, both $M \d e$ and $M / e$ have $M | S$ and $M | L$ as restrictions. Therefore, we can repeatedly remove elements that are not in the closure of $S$ or $T$ by either deletion or contraction until we are left with a minor $M'$ of $M$ such that
 \begin{itemize}
  \item $E(M') = \cl_{M'}(S) \cup \cl_{M'}(T)$,
  \item $M|S$ is a restriction of $M'$ hence $N$ is a minor of $M' / C$, 
  \item $M | L$ is a restriction of $M'$, and 
  \item $\kappa_{M'}(S, T) = 2$.
 \end{itemize}
 Suppose that $\sqcap_{M'}(S, T) \leq 1$. Then $\sqcap_{M'}(\cl_{M'}(S), \cl_{M'}(T)) \leq 1$ and so 
 \[(\cl_{M'}(S) \setminus T, \cl_{M'}(T) \setminus (\cl_{M'}(S) \setminus T))\]
 is a $2$-separation of $M'$, contradicting the fact that $\kappa_{M'}(S, T) = 2$. Therefore, $\sqcap_{M'}(S, T) = 2$ and we have $T \subseteq \cl_{M'}(S)$. Hence $L \subseteq \cl_{M'}(S)$.
 
 We let $C'$ be a maximal subset of $C$ such that $r_{M' / C'}(L) = 2$, and let $C'' = C \setminus C'$.
 We set $N' = (M' / C') | (L \cup E(N))$.
 Then $E(N') = L \cup E(N)$. Since $M | L$ is a restriction of $M'$ and $r_{M' / C'}(L) = 2$, $M | L$ is a restriction of $N'$. Since $N$ is a minor of $M' / C$, it is a minor of $N'$.
 Hence (\ref{out:first})-(\ref{out:third}) hold for $N'$. To show that $N'$ is $3$-connected and (\ref{out:fourth}) holds, we need the following claim.
 
 \begin{claim} \label{clm:squishedminor-claim}
  $L \subseteq \cl_{N'}(E(N))$, or equivalently, $\sqcap_{N'}(E(N), L) = 2$.
 \end{claim}
 
 Our choice of $C'$ implies that $C'' \subseteq \cl_{M' / C'}(L)$. By the modularity of $L$, each element of $C''$ is parallel to an element of $L$ in $M' / C'$. Moreover, as we chose $C$ with $|(E(N) \cup C) \cap L|$ maximum, we have $C'' \subset L$.
 
 Since $L \subseteq \cl_{M'}(S)$ and $C' \subseteq S$, we have $L \subseteq \cl_{N'}(E(N) \cup C'')$. 
 If $\sqcap_{N'}(E(N), L) = 0$, then this implies that $|C'' \cap L| = 2$ and that $N = (M' / C'') | E(N) = M' | E(N)$, contradicting the minimality of $|C|$.
 
 We may thus assume that $\sqcap_{N'}(E(N), L) = 1$. By the modularity of $L$, there is an element $e \in L \cap \cl_{N'}(E(N))$. Since $L \subseteq \cl_{N'}(E(N) \cup C'')$, there is an element $f$ of $C''$ that is contained in $L \setminus \{e\}$. If $C'' = \{f\}$, then as $N$ is simple, $N = (N' / f) | E(N) = N' | E(N)$, contradicting the minimality of $|C|$. Otherwise, $|C''| \geq 2$; but then again as $N$ is simple we have $(N' / C'') | E(N) = (N' / e) | E(N)$, contradicting the minimality of $|C|$.
 This proves (\ref{clm:squishedminor-claim}).
 \\
 
 Since $N$ and $M | L$ are both connected and $E(N') = E(N) \cup L$, the only possible $1$-separation of $N'$ (up to ordering) is $(E(N), L)$; but this cannot be a $1$-separation by (\ref{clm:squishedminor-claim}). So $N'$ is connected. Any parallel pair of $N'$ contains one element each of $E(N)$ and $L$, so $N'$ is simple by our maximal choice of $|(E(N) \cup C) \cap L|$.
 
 Suppose that $N'$ has a $2$-separation, $(A, B)$. The fact that $N'$ is simple means that $A \not\subseteq \cl_{N'}(B)$ and $B \not\subseteq \cl_{N'}(A)$. If $E(N) \subseteq \cl_{N'}(A)$ then $B \setminus \cl_{N'}(A) \subseteq L$, contradicting (\ref{clm:squishedminor-claim}). So $E(N)$ is not contained in $\cl_{N'}(A)$, and by symmetry is not contained in $\cl_{N'}(B)$ either. As $N$ is $3$-connected, one of $A$ and $B$ contains only one element of $E(N)$; we may assume by symmetry that $B$ does, and we call this element $e$. Note that $e \not\in \cl_{N'}(A)$. Since $B$ is not contained in $\cl_{N'}(A)$ and $N'$ is connected, $|B \setminus \cl_{N'}(A)| \geq 2$. Thus there is an element of $L$ in $B \setminus \cl_{N'}(A)$. Since $|L| \geq 3$ and $L$ is a line of $N'$, we have $L \subseteq \cl_{N'}(B)$.
 Since $L \subseteq \cl_{N'}(E(N))$, we have $r(N') = r_{N'}(E(N')) = r_{N'}(A) + 1$. So $\sqcap_{N'}(A, L) = 1$ and by the modularity of $L$, there is an element $f \in L \cap \cl_{N'}(A)$.
 Now we conclude that $C''$ consists of a single element $g \in L \setminus \{e, f\}$ so that $N = N' / g \d (L \setminus \{e, g\})$. But this is isomorphic to $N' | (A \cup \{f\})$ (under the bijection between $A \cup \{e\}$ and $A \cup \{f\}$ that fixes the elements of $A$ and sends $e$ to $f$). Therefore, $N'$ has an $N$-restriction, contradicting the minimality of $|C|$.
 This proves that $N'$ is $3$-connected.
\end{proof}

\section{Finding an $F_7$-minor}

In this section we apply \autoref{lem:squishedminor} to matroids with modular lines and $U_{2,5}$-minors.
This result will be used several times in the next section, where we analyze matroids that have $U_{2,6}$-, $U_{4,6}$-, and $P_6$-minors.
We say that a minor $N$ of a matroid $M$ \defn{uses} an element $e \in E(M)$ if $e \in E(N)$.

\begin{lemma} \label{lem:U25givesfano}
If $M$ is a $3$-connected matroid with a $U_{2,5}$-minor and a modular $4$-point line, $L$, then $M$ has an $F_7$-minor that uses three elements of $L$.
\end{lemma}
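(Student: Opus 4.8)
The plan is to use \autoref{lem:squishedminor} with $N_0 = U_{2,5}$ to reduce to the case where the ambient matroid is small: concretely, we obtain a $3$-connected minor $N'$ of $M$ with $E(N') = L \cup E(N)$, where $N \cong U_{2,5}$, such that $M|L$ is a restriction of $N'$ and $N \cong U_{2,5}$ is a minor of $N'$. Since $|E(N)| = 5$ and $|L| = 4$, and since $L$ and $E(N)$ can overlap in at most one point (as $U_{2,5}$ and a $4$-point line cannot share two points of a simple $3$-connected matroid containing both — a line with $\geq 4$ points forces non-simplicity issues if identified with two points of the $5$-point line), we have $|E(N')| \leq 9$, and in fact $r(N') = r_{N'}(L \cup E(N)) \leq r_{N'}(L) + r_{N'}(E(N)) = 2 + 2 = 4$ when $L \cap E(N) = \emptyset$, or $\leq 3$ when they meet; either way $r(N') \leq 4$. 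So we are reduced to a finite check: classify the $3$-connected matroids $N'$ of rank at most $4$ on at most $9$ elements having a modular $4$-point line $L$ and a $U_{2,5}$-minor, and show each has an $F_7$-minor using three elements of $L$.

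The key structural steps I would carry out within $N'$: First, analyze $\sqcap_{N'}(L, E(N))$. Since $L \subseteq \cl_{N'}(E(N))$ is false in general, but $r(N')$ is small, there are only a few cases. In the generic case $r(N') = 4$ with $L$ and the $U_{2,5}$-line skew or nearly skew, I would locate an element to contract so that $L$ survives as a $4$-point line and the contraction creates the needed coincidences. The guiding idea is that a modular $4$-point line in a rank-$3$ matroid, together with enough points off the line, forces an $F_7$: recall that $F_7$ itself has $4$-point lines modular in it? — no, $F_7$ has only $3$-point lines. So instead the target is different: we want $F_7$ as a minor (not restriction) using three of the four points of $L$. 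I would contract one well-chosen element of $E(N) \setminus L$ (or of $L$) to drop to rank $3$, arrange that the image of three points of $L$ together with four further points form the Fano configuration, using the modularity of $L$ to control which triangles appear. The freedom in \autoref{lem:squishedminor} that $M|L$ is a restriction of $N'$ and $N$ is a minor of $N'$ means both a $4$-point line and a $5$-point-line minor are present in a rank-$\leq 4$ matroid; the only way to fit both is quite constrained, and I expect that in every case contracting a single element yields a rank-$3$ simple matroid on $6$ or $7$ points that either is $F_7$ on the relevant points or has $F_7$ as a further restriction/minor.

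The main obstacle will be the case analysis when $L$ and $E(N)$ meet in exactly one point and $r(N') = 3$: then $N'$ is a rank-$3$ simple matroid on up to $8$ points containing a $4$-point line $L$ and a $5$-point line sharing one point with $L$, i.e.\ two long lines through a common point, which is a "book"-like configuration. I will need to show that after possibly one more contraction/deletion this produces the Fano matroid on three points of $L$ plus three of the other line plus their intersection point — essentially recognizing that the non-Fano or Fano is forced by how the remaining points of $N'$ attach, and ruling out configurations like $U_{3,6}$ or $P_6$ that do not contain $F_7$ by invoking that those do not have a modular $4$-point line. I would handle this by counting points on lines through pairs of points of $L$ and using modularity to force a third long line, completing the Fano incidences. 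The rank-$4$ subcase where the two lines are skew is easier: contracting any point of $E(N) \setminus L$ not on $L$'s plane reduces to the rank-$3$ situation just described, so the whole argument funnels into that single rank-$3$ configuration lemma.
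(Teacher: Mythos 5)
Your opening move matches the paper exactly: apply \autoref{lem:squishedminor} with $N_0 = U_{2,5}$ to reduce to a $3$-connected minor $N'$ with $E(N') = L \cup E(N)$, note that $r(N') \leq r(N)+r_{N'}(L) = 4$, and then do a case analysis on the rank. However, the rest of the proposal contains concrete errors in the geometry and does not actually carry out the case analysis, so it does not establish the lemma.

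The most serious problem is your description of the rank-$3$ case. You assert the situation is ``a rank-$3$ simple matroid on up to $8$ points containing a $4$-point line $L$ and a $5$-point line sharing one point with $L$'', i.e.\ a book-like configuration. This is wrong: \autoref{lem:squishedminor} gives $U_{2,5}$ only as a \emph{minor} of $N'$, not a restriction. When $r(N') = 3$, the five elements of $E(N)$ span all of $N'$ (since $N'$ is $3$-connected, $E(N)$ is spanning), and the $5$-point line only appears \emph{after contracting} some $y \in L$. So there is no $5$-point line inside $N'$, and no second ``long line'' to intersect with $L$. The configuration you propose to analyze is not the one that occurs. The correct rank-$3$ analysis (as in the paper) is to observe that no triangle of $N'$ contains $y$ and two elements of $E(N)$, that each point of $E(N)$ lies on a line through a fixed base point and a point of $L$, and then to use modularity of $L$ to force the remaining Fano triangles.

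Your proposed funneling of the rank-$4$ case into the rank-$3$ case also fails. In the rank-$4$ case, $N'/L = N \cong U_{2,5}$, so $E(N)$ is skew to $L$ and $r_{N'}(E(N)) = 4$. If you contract a point $e \in E(N)$, the remaining four points of $E(N)$ have rank $3$ in $N'/e$, not rank $2$, so they do not form a line; and $(N'/e)/L = N/e \cong U_{1,4}$, so the $U_{2,5}$-minor is destroyed. Thus $N'/e$ is not an instance of the rank-$3$ situation you describe, and the reduction collapses. The rank-$4$ case genuinely needs a separate argument (in the paper, a counting argument over the ten $3$-subsets of $E(N)$ and the map $X \mapsto \phi(X) \in L$ that records which element of $L$ is spanned by $X$).

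Finally, even setting aside these errors, the proposal never actually carries out the finite check it promises: the claims about which triangles modularity forces, and why $P_6$ or $U_{3,6}$ cannot arise, are stated as expectations rather than proved. As written, the proposal is a strategy outline with incorrect intermediate pictures, not a proof.
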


\begin{proof}
We let $M$ be a $3$-connected matroid with a modular $4$-point line, $L = \{u, v, w, x\}$, and a $U_{2,5}$-minor.
By \autoref{lem:squishedminor}, we may assume that $E(M) = L \cup E(N)$, where $N$ is a $U_{2,5}$-minor of $M$.
We note that $r(M) \leq r(N) + r_M(L) = 4$.

Let $E(N) = \{a, b, c, d, e\}$.
If $r(M) = 2$ then both $M | L$ and $N$ are restrictions of $M$. Then $E(N) \subseteq \cl_M(L)$ and the modularity of $L$ implies that each element of $E(N)$ is parallel to a distinct element of $L$, a contradiction because $|E(N)| > |L|$.

\begin{claim} \label{clm:U25givesfano-rank3}
 If $r(M) = 3$ then $M$ has an $F_7$-minor that uses three elements of $L$.
\end{claim}

Assume that $r(M) = 3$. Then there is an element $y \in L$ such that $M / y$ has $N$ as a restriction; we may assume that $y = x$. Then no triangle of $M$ contains $x$ and two elements of $E(N)$.
Also, at most one element of $E(N)$ is contained in $L$. We may assume that $a, b, c, d \not\in L$. 

We note that every element of $E(N) \setminus L$ is contained in $\cl_M(\{a, z\})$ for some $z \in L$.
Suppose that $\cl_M(\{a, u\}) \setminus \{a, u\}$ contains at least two elements of $E(N)$; we may assume they are $b$ and $c$. Since $(L \setminus \{u\}, \cl_M(\{a, u\}))$ is not a $2$-separation of $M$, there is an element of $E(N)$ that is neither in $L$ nor in $\cl_M(\{a, u\})$. So we may assume that $d \in \cl_M(\{a, v\})$. The modularity of $L$ implies that each of $a, b$, and $c$ is parallel to a distinct element of $L \setminus \{u\}$ in $M / d$. So one of $a, b$ or $c$ is parallel to $x$ in $M / d$, contradicting the fact that no triangle contains $x$ and two elements of $E(N)$. Hence $\cl_M(\{a, u\}) \setminus \{a, u\}$ contains at most one element of $E(N)$. By symmetry the same is true for $\cl_M(\{a, v\}) \setminus \{a, v\}$ and $\cl_M(\{a, w\}) \setminus \{a, w\}$. 
Thus we may assume that $b \in \cl_M(\{a, u\})$, $c \in \cl_M(\{a, v\})$, and $d \in \cl_M(\{a, w\})$.

The modularity of $L$ implies that $\{b, c\}$ is in a triangle with some element $z$ of $L$. Since $c \not\in \cl_M(\{a, u\})$, $z \neq u$, and similarly, $z \neq v$. Also, $z \neq x$ as $\{b, c\}$ is not a parallel pair of $M / x$. So $\{w, b, c\}$ is a triangle. By a similar argument, $\{u, c, d\}$ and $\{v, b, d\}$ are triangles. Hence $M | \{a, b, c, d, u, v, w\} \cong F_7$, which proves (\ref{clm:U25givesfano-rank3}).
\\

We may assume that $r(M) = 4$. This means $M / L = N$.

\begin{claim} \label{clm:U25givesfano-notriangle}
 $M$ has no triangles not contained in $L$.
\end{claim}

No triangle of $M$ contains exactly one element of $L$ since $M / L = N$ is simple. Hence if there is a triangle of $M$ not contained in $L$, it is disjoint from $L$. We may assume that $\{a, b, c\}$ is a triangle. No element of $L$ is in $\cl_M(\{a, b, c\})$, so the modularity of $L$ implies that $\sqcap_M(\{a, b, c\}, L) = 0$. We have $d, e \not\in \cl_M(\{a, b, c\})$ for otherwise $(E(N), L)$ would be a $2$-separation. Thus each of $\cl_M(\{a, b, c, d\})$ and $\cl_M(\{a, b, c, e\})$ is a rank-$3$ flat so the modularity of $L$ implies that each contains an element of $L$. Since $(E(N), L)$ is not a $2$-separation, they each contain a distinct element of $L$. Hence we may assume that $u \in \cl_M(\{a, b, c, d\})$ and $v \in \cl_M(\{a, b, c, e\})$.

This means that $r_M(E(N)) = r(M) = 4$ and so $\sqcap_M(\{a, b, c\}, \{d, e\}) = 0$. Therefore, $\{a, b, c\}$ contains no parallel pairs of $M / d,e$, so each element of $\{a, b, c\}$ is parallel to a distinct element of $L$ in $M / d, e$. So there is a parallel pair of $M / d, e$ containing one of $u$ or $v$ and one of $a, b$, or $c$. We may assume by symmetry that it is $\{u, a\}$; hence $\{u, a, d, e\}$ is a circuit of $M$.
But $\{u, d, a, b\}$ is also a circuit, which implies that $r_M(\{u, a, b, d, e\}) = 3$. This is a contradiction since $r_M(\{a, b, d, e\}) = r_M(E(N)) = 4$. This proves (\ref{clm:U25givesfano-notriangle}).
\\

It follows from (\ref{clm:U25givesfano-notriangle}) that every $3$-element subset $X$ of $E(N)$ is independent in $M$, so it satisfies $\sqcap_M(X, L) = 1$ and there is a $4$-element circuit consisting of $X$ and one element of $L$. We denote this element of $L$ by $\phi(X)$.

There are ten $3$-element subsets of $E(N)$, so we may assume that $u$ is equal to $\phi(X)$ for at least three such sets $X$. Then there are two $3$-element subsets $X_1, X_2$ of $E(N)$ such that $\phi(X_1) = \phi(X_2) = u$ and such that $|X_1 \cap X_2| = 2$. Hence $|X_1 \cup X_2| = 4$. The sets $X_1 \cup \{u\}$ and $X_2 \cup \{u\}$ are circuits, so $X_1 \cup X_2$ has rank three and it is also a circuit.

Suppose that for some $z \in L \setminus \{u\}$, there are at least three $3$-element subsets $X$ of $E(N)$ such that $\phi(X) = z$. Then by the same argument that we applied to $u$, there is a $4$-element circuit $Y \subset E(N)$ such that $r_M(Y \cup \{z\}) = 3$. But $|Y \cap (X_1 \cup X_2)| = 3$ and both $Y$ and $X_1 \cup X_2$ are circuits, implying that $r_M(E(N)) = 3$. But that means that $(E(N), L)$ is a $2$-separation, a contradiction.
Therefore, $u$ is the unique element of $L$ that is equal to $\phi(X)$ for at least three $3$-element subsets $X \subset E(N)$.

We may assume that $e$ is the unique element of $E(N) \setminus (X_1 \cup X_2)$. We note that $e \not\in \cl_M(X_1 \cup X_2)$ for then $(E(N), L)$ would be a $2$-separation of $M$.
This implies that $u \neq \phi(X)$ for any $3$-element set $X \subset E(N)$ containing $e$. Hence $u = \phi(X)$ for precisely four $3$-element subsets of $E(N)$. In particular, these are the four $3$-element subsets of $X_1 \cup X_2$.
Therefore, each of $v$, $w$, and $x$ is equal to $\phi(X)$ for precisely two $3$-element subsets $X \subset E(N)$, and these subsets all contain $e$.

Hence in the matroid $M / e \d u$, $\{v, w, x\}$ is a triangle, and each element $z$ of $\{v, w, x\}$ is in precisely two triangles $T_1$ and $T_2$ whose other elements are contained in $X_1 \cup X_2 = \{a, b, c, d\}$. In each case, $T_1 \cap T_2 = \{z\}$ otherwise $z$ would be in a four-point line and hence contained in more than two such triangles.
So each element of $\{a, b, c, d\}$ is also in three triangles of $M / e \d u$, one containing each of $u,v$, and $w$.
So $M / e \d x$ is a $7$-element, rank-$3$ matroid in which every element is in precisely three triangles. Therefore, it is isomorphic to $F_7$.
\end{proof}

Lemmas~\ref{lem:U25givesfano}, \ref{lem:U25U35}, and \ref{lem:F7F7*} together imply that any $3$-connected matroid with a modular $4$-point line that has an $F_7^*$-minor also has an $F_7$-minor. This is the reason that outcome (\ref{out:4ptlinetheorem-last}) of \autoref{thm:4ptlinetheorem} can guarantee the existence of an $F_7$-minor rather than only an $F_7$- or an $F_7^*$-minor.
These lemmas do not hold with $F_7^-$ in place of $F_7$, causing the lack of symmetry between these two matroids in \autoref{thm:4ptlinetheorem}.

\section{Excluded minors}

In this section, we mention two excluded-minor characterizations of representability over $\GF(3)$ and $\GF(4)$, and we prove that a $3$-connected matroid with a modular four-point line and no $F_7^-$- or $(F_7^-)^*$-minor is ternary or quaternary.
This allows us to easily finish the proof of our main theorem.

\begin{theorem}[Bixby and Reid, \cite{Bixby}; Seymour, \cite{Seymour:ternaryexcludedminors}] \label{thm:reidstheorem}
The excluded minors for the class of ternary matroids are $U_{2,5}$, $U_{3,5}$, $F_7$, and $F_7^*$.
\end{theorem}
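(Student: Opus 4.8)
The plan is to prove the two inclusions separately: that each of $U_{2,5}$, $U_{3,5}$, $F_7$, and $F_7^*$ is an excluded minor for the ternary matroids, and that there are no others.

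For the first inclusion I would check, for each of the four matroids, that it is non-ternary while all of its proper minors are ternary. None is ternary: a line in a $\GF(3)$-representable matroid has at most $|\GF(3)|+1=4$ points, so $U_{2,5}$ fails, and $U_{3,5}=U_{2,5}^*$ fails since ternariness is closed under duality; $F_7$ is representable only in characteristic two, and dually $F_7^*$ is not ternary. Each proper minor is ternary: for the uniform matroids this reduces to the fact that every proper minor of $U_{2,5}$ or $U_{3,5}$ is, up to loops and coloops, a minor of $U_{2,4}$, which is ternary; for $F_7$ one checks the finitely many deletions and single-element contractions ($F_7\d e\cong M(K_4)$, which is regular and hence ternary, while $\si(F_7/e)\cong U_{2,3}$), and $F_7^*$ follows by duality. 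Hence all four lie in the excluded-minor set.

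For the second inclusion --- that any matroid $M$ with no minor in $\{U_{2,5},U_{3,5},F_7,F_7^*\}$ is ternary --- I would argue by induction on $|E(M)|$ using two reductions. First, if $M$ is not $3$-connected then $M$ is a $1$- or $2$-sum of two matroids, each of which is isomorphic to a proper minor of $M$ and hence has no forbidden minor; by induction each is ternary, and since the class of $\GF(3)$-representable matroids is closed under $1$- and $2$-sums, $M$ is ternary. So assume $M$ is $3$-connected. Second, if $M$ is binary then, having no $F_7$- or $F_7^*$-minor, it is regular by Tutte's excluded-minor characterization of regular matroids, hence representable over every field and in particular ternary. So we may assume $M$ is $3$-connected and non-binary; by Tutte's characterization of binary matroids, $M$ then has a $U_{2,4}$-minor. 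Using the Splitter Theorem (with $U_{2,4}\cong\mathcal{W}^2$, and noting that all wheels and whirls are ternary), either $M$ is a whirl --- and we are done --- or $M$ admits a chain of $3$-connected minors $U_{2,4}=N_0,N_1,\dots,N_k=M$ with $|E(N_{i+1})|=|E(N_i)|+1$, where each $N_{i+1}$ is a single-element extension or coextension of $N_i$ and, by minor-closedness, has no forbidden minor. The remaining step is to show that if $N_i$ is a $3$-connected ternary matroid with a $U_{2,4}$-minor and $N_{i+1}$ is a $3$-connected single-element extension or coextension of $N_i$ with no minor in $\{U_{2,5},U_{3,5},F_7,F_7^*\}$, then $N_{i+1}$ is ternary; feeding this along the chain gives that $M$ is ternary.

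The main obstacle is exactly this last step, and it is where the real content of the theorem lies. One must show that excluding the two uniform matroids (which bound the number of points on any line and on any line of the dual) together with the Fano and its dual (which forbid the unique configuration that is binary but not ternary) suffices to force any admissible new point into a position realizable over $\GF(3)$ --- for instance, by exploiting the near-uniqueness of $\GF(3)$-representations of $3$-connected ternary matroids to identify the column the new element must occupy, and extracting a forbidden minor whenever no such column exists. This configuration analysis, rather than the structural reductions that precede it, is the delicate part.
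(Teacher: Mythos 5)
This theorem is cited in the paper, not proved there: the author attributes it to Bixby \cite{Bixby} and Seymour \cite{Seymour:ternaryexcludedminors} and uses it as a black box in the final step of the proof of \autoref{thm:4ptlinetheorem}. There is therefore no in-paper proof against which to compare your attempt; what you are offering is a sketch of a proof from the literature.

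Taken on those terms, your outline is a reasonable summary of the modern, Splitter-Theorem-based treatment (as in Chapter~10 of Oxley's book), rather than of the original arguments of Bixby or Seymour, both of which predate the Splitter Theorem and proceed along different lines. The easy half --- checking that each of $U_{2,5}$, $U_{3,5}$, $F_7$, $F_7^*$ is non-ternary with all proper minors ternary, reducing to the $3$-connected case via closure of $\GF(3)$-representability under $1$- and $2$-sums, disposing of the binary case by Tutte's regular-matroid theorem, and setting up a chain of $3$-connected minors upward from $U_{2,4}$ (with wheels and whirls handled separately, both being ternary) --- is correct and standard. But, as you acknowledge, the entire substance of the theorem lies in the step you leave unproved: that a $3$-connected single-element extension or coextension of a $3$-connected ternary matroid containing $U_{2,4}$, itself avoiding the four forbidden minors, must again be ternary. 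Pointing at the unique $\GF(3)$-representability of $3$-connected ternary matroids (Brylawski--Lucas) names the right tool, but the case analysis that locates a $U_{2,5}$-, $U_{3,5}$-, $F_7$-, or $F_7^*$-minor whenever the new element cannot be realized by a $\GF(3)$-column is precisely where all the work is, and it is absent. As written, this is a scaffold with the load-bearing beam asserted rather than supplied.
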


The next theorem involves the matroids $P_6$, $P_8''$ and $S(5,6,12)$.
The matroid $P_6$ is the six-element simple, rank-$3$ matroid with exactly one triangle, whose complement is a triad.

The only properties of $S(5,6,12)$ and $P_8''$ that we need are that $S(5,6,12)$ is ternary and $P_8''$ has no four-point line; the precise definitions of these two matroids are as follows.
The matroid $S(5,6,12)$ is represented over $\GF(3)$ by the matrix
$$\left(\begin{array}{ccccc|c}
  & & I_6 & & & {\begin{array}{rrrrrr}
 0 & 1 & 1 & 1 & 1 & 1 \\
 1 & 0 & 1 & -1& -1& 1 \\
 1 & 1 & 0 & 1 & -1& -1 \\
 1 & -1& 1 & 0 & 1 & -1 \\
 1 & -1& -1& 1 & 0 & 1 \\
 1 & 1 & -1& -1& 1 & 0 
\end{array}}
\end{array}\right).$$
The properties of this matroid are discussed in \cite{Oxley}; in particular, it has a $5$-transitive automorphism group. The matroid $P_8$ is obtained from $S(5,6,12)$ by deleting two elements and contracting two elements, and the matroid $P_8''$ is obtained from $P_8$ by relaxing its unique pair of disjoint circuit-hyperplanes.

\begin{theorem}[Geelen, Oxley, Vertigan, Whittle, \cite{GeelenOxleyVertiganWhittle}] \label{thm:threeexcludedminors}
If $M$ is a $3$-connected non-$\GF(4)$-representable matroid, then either
\begin{enumerate}[(i)]
 \item $M$ has a $U_{2,6}$-, $U_{4,6}$-, $P_6$-, $F_7^-$-, or $(F_7^-)^*$-minor,
 \item $M$ is isomorphic to $P_8''$, or
 \item $M$ is isomorphic to a minor of $S(5,6,12)$ with rank and corank at least $4$.
\end{enumerate}
\end{theorem}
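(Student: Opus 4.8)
I will prove the result in its equivalent form as the excluded-minor characterisation of $\GF(4)$-representability: the excluded minors for the class of $\GF(4)$-representable matroids are $U_{2,6}$, $U_{4,6}$, $P_6$, $F_7^-$, $(F_7^-)^*$, $P_8$, and $P_8''$, together with two \emph{universality} facts --- any $3$-connected non-$\GF(4)$-representable matroid that has a $P_8''$-minor but none of the five small minors $U_{2,6}, U_{4,6}, P_6, F_7^-, (F_7^-)^*$ must be $P_8''$ itself, and any such matroid having a $P_8$-minor instead must be a minor of $S(5,6,12)$ (necessarily of rank and corank at least $4$, since $r(P_8) = r^*(P_8) = 4$ and $P_8$ is a minor of $S(5,6,12)$). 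Granting these, outcomes (i)--(iii) follow immediately, as a $3$-connected non-$\GF(4)$-representable matroid must have one of the seven excluded minors as a minor. Since the class of $\GF(4)$-representable matroids is closed under direct sums and $2$-sums, every excluded minor is $3$-connected; from now on representable means $\GF(4)$-representable, and $M$ denotes a $3$-connected excluded minor, which we aim to identify.

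First, $M$ has a $U_{2,4}$-minor: otherwise $M$ is binary by Tutte's theorem, hence $\GF(4)$-representable, a contradiction. The engine of the proof is the rigidity of $\GF(4)$-representations supplied by Kahn's uniqueness theorem: every $3$-connected $\GF(4)$-representable matroid has an essentially unique $\GF(4)$-representation, so $U_{2,4}$ is a stabiliser. In particular, since every proper minor of $M$ is representable, each of $M \d e$ and $M / e$ has (up to the usual equivalences) a unique representation, and a representation of any $3$-connected minor carrying a $U_{2,4}$-minor extends to the ambient matroid in at most one way. This lets us compare the representations of $M \d e$ and $M / e$ against each other for each $e \in E(M)$.

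The bulk of the work is to bound the size of $M$. Using Bixby's lemma --- for every $e$, one of $M \d e$, $M / e$ is $3$-connected after simplification or cosimplification --- together with Splitter-type arguments that preserve a $U_{2,4}$-minor, one shows that whenever $r(M)$ and $r^*(M)$ are both at least $5$, some element can be deleted or contracted so that the resulting minor is $3$-connected (after simplification or cosimplification), representable, still has a $U_{2,4}$-minor, and still spans most of $M$. Playing the unique representation of that minor against the unique representation of a complementary minor, one finds that either these glue into a representation of all of $M$ --- contradicting that $M$ is an excluded minor --- or $M$ is pinned down completely. Pushing this through forces $\min\{r(M), r^*(M)\} \leq 3$, or else $r(M), r^*(M) \geq 4$ and, since the absence of a $U_{2,6}$-minor bounds line lengths (and, dually, cocircuit sizes), $|E(M)| \leq 12$, the extremal configurations being those that sit inside $S(5,6,12)$.

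It remains to run the finite case analysis. If $r(M) \leq 2$ then $M \cong U_{2,6}$, and dually if $r^*(M) \leq 2$ then $M \cong U_{4,6}$. If $r(M) = 3$ one classifies the simple, $3$-connected, non-$\GF(4)$-representable rank-$3$ matroids in which every line has at most $5$ points: a planar point-configuration argument yields exactly $P_6$ and $F_7^-$ (with both $U_{3,6}$ and $F_7$ being $\GF(4)$-representable), and dually for $r^*(M) = 3$. In the remaining range --- $r(M), r^*(M) \in \{4, 5, 6\}$ and $|E(M)| \leq 12$ --- one identifies $P_8$ and $P_8''$ and verifies the two universality facts above, using throughout Kahn's uniqueness and, for the matroids living inside $S(5,6,12)$, its highly transitive automorphism group to curtail the casework. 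I expect this finite analysis, together with the connectivity-and-rigidity induction of the previous paragraph, to be the main obstacle: the induction is delicate because one must simultaneously preserve a $U_{2,4}$-minor, preserve $3$-connectivity up to simplification and cosimplification, and carry the unique representations through each reduction, and the terminal check remains substantial even after the symmetry reductions.
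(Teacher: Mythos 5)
The first thing to say is that the paper does not prove this statement: it is imported verbatim from Geelen, Oxley, Vertigan, and Whittle's paper on the excluded minors for quaternary matroids, so there is no internal proof to compare against. Your reduction of the statement to (a) the excluded-minor list $\{U_{2,6}, U_{4,6}, P_6, F_7^-, (F_7^-)^*, P_8, P_8''\}$ together with (b) the two ``universality'' facts about $P_8''$ and about matroids with a $P_8$-minor is a correct account of why the theorem takes this three-outcome form, and your identification of Kahn's uniqueness theorem for $\GF(4)$-representations and stabilizer-style gluing as the engine does match the strategy of the cited proof.

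As a proof, however, the proposal has a gap that amounts to essentially the entire content of that paper: every load-bearing step is asserted rather than established. This includes the claim that the connectivity-and-rigidity induction forces $\min\{r(M), r^*(M)\} \leq 3$ or $|E(M)| \leq 12$ with the extremal configurations sitting inside $S(5,6,12)$; the two universality facts themselves, which are precisely what separates this statement from the bare excluded-minor list and which require their own splitter-type analysis; and the terminal finite case check, which you explicitly defer. There is also a small technical slip worth flagging: the absence of a $U_{4,6}$-minor does not bound cocircuit sizes --- $3$-connected matroids with no $U_{4,6}$-minor can have arbitrarily large cocircuits (graphic matroids already show this). What it bounds is the dual of line length, namely the number of points on a line of $M^*$. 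In short, the proposal is a reasonable road map to the external proof, not a substitute for it, and within this paper the statement should simply be treated as a cited black box.
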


As an application of \autoref{thm:threeexcludedminors}, we prove that any $3$-connected matroid $M$ with a modular $4$-point line and no $F_7^-$ or $(F_7^-)^*$-minor is ternary or quaternary. To prove this using \autoref{thm:threeexcludedminors}, we need to show that $M$ has no $U_{2, 6}$-, $U_{4,6}$-, or $P_6$-minor. We do that in a sequence of four lemmas, after first proving the following small result.

\begin{lemma} \label{lem:fanopluslinehasnoextension}
 Let $M$ be a simple rank-$3$ matroid with a modular $4$-point line $L$ and $x \in L$. If $M \d x$ has an $F_7$-restriction $P$ that uses three elements of $L$, then $M \d x = P$.
\end{lemma}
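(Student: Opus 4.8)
The plan is to argue by contradiction: assume there is an element $e \in E(M) \setminus (E(P) \cup \{x\})$ and derive a contradiction. Since $P$ is a restriction of $M \setminus x$ that uses three elements of $L$, those three elements must be $L \setminus \{x\}$, so $L \subseteq E(P) \cup \{x\}$; hence $e \notin L = \cl_M(L)$, and therefore $L$ is modular in $M/e$. Because $M$ is simple of rank $3$ and $P \cong F_7$ spans $M$, the matroid $M/e$ has rank $2$ and $e \in \cl_M(E(P))$. Moreover the four elements of $L$ remain pairwise non-parallel non-loops of $M/e$: if two elements of $L$ were parallel in $M/e$, then $e$ would lie in the closure of their span, which is $\cl_M(L) = L$. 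So, passing to $\si(M/e)$ (chosen to retain all of $L$), $L$ is still a modular spanning set of size four.

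The next step is to observe that a modular spanning set in a \emph{simple} rank-$2$ matroid must be the whole ground set: if $\bar p$ were a point outside it, then taking $F = \{\bar p\}$ gives $r(L) + r(F) = 3$ while $r(L \cup F) + r(L \cap F) = 2$, contradicting modularity. Hence $E(\si(M/e)) = L$, i.e.\ every element of $M/e$ is a non-loop parallel to exactly one of the four elements of $L$. Now count: the seven elements of $E(P)$ are distributed among the four parallel classes of $M/e$ indexed by $L$. The crucial geometric input — essentially the argument used in the proof of \autoref{lem:F7F7*} — is that $e$ lies in the $M$-closure of at most one line of $P$, since any two lines of $F_7$ meet in a point and $M$ is simple. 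If some parallel class of $M/e$ contains two elements $p, q \in E(P)$, then $e \in \cl_M(\{p,q\}) \subseteq \cl_M(\ell)$, where $\ell$ is the (three-point) line of $P$ through $p$ and $q$; so every parallel class that meets $E(P)$ in more than one element is contained in this single line $\ell$. Consequently at most one of the four classes contains more than one element of $E(P)$, and that class has size at most three, giving $|E(P)| \leq 3 + 1 + 1 + 1 = 6 < 7$, a contradiction.

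Therefore no such $e$ exists, so $E(M \setminus x) = E(P)$ and, $P$ being a restriction of $M \setminus x$, we conclude $M \setminus x = P$. The only points requiring care are the transfer of modularity of $L$ first to $M/e$ and then to $\si(M/e)$ (both instances of the stated fact that a modular flat remains modular in any minor having $M|L$ as a restriction), and re-deriving the "$e$ lies in the closure of at most one line of $P$" statement in this non-binary setting; this last step is the main obstacle, but it is a direct adaptation of the corresponding claim in the proof of \autoref{lem:F7F7*}.
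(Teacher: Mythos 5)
Your proposal is correct and rests on the same two pivots as the paper's proof: (i) an element $e$ outside $E(P)\cup\{x\}$ lies in the $M$-closure of at most one line of $P$ (the same observation as in the proof of \autoref{lem:F7F7*}), and (ii) modularity of $L$ in $M/e$ forces every non-loop of $M/e$ to be parallel to an element of $L$. The only difference is cosmetic: the paper extracts two specific elements $a,b\in E(P)\setminus L$ that avoid the one exceptional line, shows modularity forces both to be parallel to $x$ in $M/e$, and hence parallel to each other, a contradiction; you instead package the same information as a count of the seven elements of $E(P)$ across the four parallel classes indexed by $L$, obtaining $7\le 3+1+1+1=6$. Both closings are sound and of comparable length, so this is essentially the same argument presented with a slightly more uniform bookkeeping step.
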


\begin{proof}
 Suppose that $M \d x$ has an $F_7$-restriction $P$ that uses three elements of $L$ and that $M \d x \neq P$. Then there is an element $f \in E(M \d x) \setminus E(P)$. Since $L$ is modular and $M$ is simple, $f \not\in \cl_M(L)$. We note that $P$ is a projective plane so any two lines of $P$ have a non-empty intersection. Hence there is at most one line $F$ of $P$ with the property that $f \in \cl_M(F)$, for otherwise $f$ would be parallel to an element of $P$. Thus there are two elements $a, b \in E(P) \setminus L$ such that neither $a$ nor $b$ is parallel to any element of $E(P)$ in $M / f$. Then the modularity of $L$ implies that $\{x, f, a\}$ and $\{x, f, b\}$ are triangles. But then $\{f, a, b\}$ is also a triangle so $a$ and $b$ are parallel in $M / f$, a contradiction.
\end{proof}

We recall the circuit elimination axiom, which we will use several times in the rest of this section: if $C_1$ and $C_2$ are circuits of a matroid $M$ and $e \in C_1 \cap C_2$, then there is a circuit of $M$ contained in $(C_1 \cup C_2) \setminus \{e\}$.

\begin{lemma} \label{lem:nou26}
 If $M$ is a $3$-connected matroid with a modular $4$-point line then $M$ has no $U_{2,6}$-minor.
\end{lemma}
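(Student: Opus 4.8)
The plan is to suppose for a contradiction that $M$ is a $3$-connected matroid with a modular $4$-point line $L$ and a $U_{2,6}$-minor, and to derive a contradiction by building an $F_7$-restriction in a rank-$3$ minor and then extending it, contradicting \autoref{lem:fanopluslinehasnoextension}. First I would invoke \autoref{lem:squishedminor} with $N_0 = U_{2,6}$ to reduce to the case $E(M) = L \cup E(N)$ where $N$ is a $U_{2,6}$-restriction of a minor of $M$; in particular $r(M) \leq r(N) + r_M(L) = 4$, and $M$ is simple and $3$-connected. So the argument splits into the cases $r(M) \in \{2, 3, 4\}$. The case $r(M) = 2$ is immediate: then both $M|L$ and $N$ would be restrictions, forcing each of the $6$ elements of $E(N)$ to be parallel to a distinct element of the $4$-point line $L$, which is impossible.

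The main work is the rank-$3$ case, and this is where I expect the real obstacle to lie. If $r(M) = 3$, some element $y \in L$ has $M/y$ with a $U_{2,6}$-restriction; say $y = x$ and write $L = \{u,v,w,x\}$. Then no triangle of $M$ through $x$ meets $E(N)$ in two elements, and at most one element of $E(N)$ lies on $L$. Following the template of \autoref{clm:U25givesfano-rank3} in the proof of \autoref{lem:U25givesfano}, I would show that the six elements of $E(N)$, each lying on a line through some fixed $a \in E(N)$ and a point of $L$, must distribute so that at most one of them lies in each of $\cl_M(\{a,u\})$, $\cl_M(\{a,v\})$, $\cl_M(\{a,w\})$ beyond $\{a,\cdot\}$ — using that $(L\setminus\{z\}, \cl_M(\{a,z\}))$ is not a $2$-separation and the modularity of $L$ to push elements onto distinct points of $L$ in an appropriate contraction, hitting the forbidden-triangle-through-$x$ condition. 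The subtlety is that $U_{2,6}$ has six elements while there are only three "free" points $v, w, x$ of $L\setminus\{u\}$ after one is used, so a pigeonhole count should force the configuration to contain $F_7$ as the restriction on $\{a\}$ together with the three pairs and the three points $u, v, w$ — and then the two leftover elements of $E(N)$ must sit on this $F_7$ as a single-element extension of rank $3$ using three points of $L$, which \autoref{lem:fanopluslinehasnoextension} (applied with the element $x$, or a parallel point) forbids. Getting the bookkeeping exactly right — tracking which line of the putative $F_7$ each surplus element lands on, and checking it really does yield a genuine rank-$3$ extension rather than a parallel element — is the delicate part.

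For the rank-$4$ case I would argue that $M/L = N \cong U_{2,6}$ forces a strong circuit structure: as in \autoref{clm:U25givesfano-notriangle} and the counting that follows it in \autoref{lem:U25givesfano}, $M$ has no triangle off $L$, every $3$-subset $X$ of $E(N)$ is independent and spans exactly one $4$-circuit $X \cup \{\phi(X)\}$ with $\phi(X) \in L$, and a pigeonhole on the $\binom{6}{3}=20$ triples over the $4$ points of $L$ gives some point used at least five times; pushing this through the rank constraints (a $4$-element circuit inside $E(N)$ would collapse $r_M(E(N))$ and make $(E(N), L)$ a $2$-separation) should either be outright contradictory or reduce to a rank-$3$ minor containing $F_7$ on three points of $L$, again contradicting \autoref{lem:fanopluslinehasnoextension} after passing to $M/e\d z$ for suitable $e \in E(N)$ and $z \in L$. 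A cleaner route for this case may simply be to contract one element of $E(N)$ not on $L$ to land in rank $3$ with a $U_{2,5}$-minor still present and invoke \autoref{lem:U25givesfano} together with \autoref{lem:fanopluslinehasnoextension} directly; I would check whether that shortcut avoids the full rank-$4$ casework.
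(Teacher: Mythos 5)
Your overall strategy is correct, and you have identified the right tools: reduce via \autoref{lem:squishedminor} to $E(M)=L\cup E(N)$ and $r(M)\leq 4$, obtain an $F_7$-minor $P$ on three points of $L$ via \autoref{lem:U25givesfano}, and close with \autoref{lem:fanopluslinehasnoextension}. However, you have misjudged where the difficulty lies and the proposal does not resolve the rank-$4$ case.

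For the rank-$3$ case you propose to rebuild the $F_7$ configuration by hand, but there is no need: once \autoref{lem:U25givesfano} has supplied a copy $P$ of $F_7$ using three elements of $L$, and $r(M)=3$, $P$ is a \emph{restriction} of $M$. \autoref{lem:fanopluslinehasnoextension} then forces $E(M)=E(P)\cup\{x\}$, i.e.\ $M$ has exactly four elements off $L$. On the other hand, $N\cong U_{2,6}$ is a restriction of $M/z$ for some $z\in L$, so at most one element of $E(N)$ lies on $L$ and at least five lie off it --- a contradiction in one line. The real work is not here.

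The rank-$4$ case is where the substance lives, and your two proposed routes both have gaps. The pigeonhole route does not transfer cleanly from $U_{2,5}$ to $U_{2,6}$: with five triples $X$ having $\phi(X)=u$ you can indeed find two sharing two elements, giving a $4$-element circuit $X_1\cup X_2\subseteq E(N)$, but since $|E(N)|=6$ the two elements of $E(N)\setminus(X_1\cup X_2)$ need not lie in $\cl_M(X_1\cup X_2)$, so $r_M(E(N))$ does not collapse and $(E(N),L)$ is not a $2$-separation; the step in \autoref{lem:U25givesfano} that compares two $4$-element circuits relies on $|E(N)|=5$ forcing them to share three elements, which fails here. Your shortcut of contracting some $e\in E(N)$ and invoking \autoref{lem:U25givesfano} plus \autoref{lem:fanopluslinehasnoextension} on $\si(M/e)$ runs into the same core obstacle: the conclusion only bites if $M/e$ has \emph{more} than eight elements, equivalently if $e$ lies in no triangle of $M$ so that $M/e$ is simple. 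Establishing precisely this --- that for the extra element $f\in E(N)\setminus(E(P)\cup\{e\})$ the pair $\{e,f\}$ is not contained in any triangle of $M$ --- is the genuine content of the rank-$4$ case, and it requires its own careful argument (the paper's Claim~\eqref{clm:nou26-notriangle}, which analyses how $4$-element circuits meeting $L$ interact with the known triangles of $P$). As written, the proposal does not identify this claim or supply an argument for it.
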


\begin{proof}
Let $M$ be a minor-minimal $3$-connected matroid with a modular $4$-point line, $L = \{u, v, w, x\}$, and a $U_{2,6}$-minor, $N$.
\autoref{lem:squishedminor} implies that $E(M) = L \cup E(N)$.

Note that since $M$ is $3$-connected, $r_M(E(N)) = r(M)$.
Applying \autoref{lem:U25givesfano}, we conclude that $M$ has an $F_7$-minor $P$ that uses three elements of $L$.
Thus $3 \leq r(M) \leq r(N) + r_M(L) \leq 4$.

\begin{claim} \label{clm:nou26-notrank3}
 $r(M) = 4$
\end{claim}

If not, then $r(M) = 3$. In this case, the $F_7$-minor $P$ is a restriction of $M$.
Also, for some $z \in L$, $M / z$ has $N$ as a restriction. Hence $L$ contains at most one element of $E(N)$ so at least five elements of $E(N)$ are disjoint from $L$.
But this contradicts \autoref{lem:fanopluslinehasnoextension} which says that $M$ has exactly four elements disjoint from $L$.
This proves (\ref{clm:nou26-notrank3}).
\\

Since, $r(M) = 4$, $N = M / L$. Also, there is an element $e \in E(N)$ such that $M / e$ has the $F_7$-minor $P$ as a restriction. We may assume that $x$ is the unique element of $L \setminus E(P)$. Since $|E(P) \setminus L| = 4$, there is an element of $E(N)$ that is not in $E(P) \cup \{e\}$. We denote it by $f$.

\begin{claim} \label{clm:nou26-notriangle}
 $\{e, f\}$ is not contained in a triangle of $M$.
\end{claim}

For every $3$-element subset $X \subset (E(P) \setminus L) \cup \{e\}$, the set $X \setminus \{e\}$ is independent in $P$ and so $X$ is independent in $M$. Thus $\sqcap_M(X, L) \geq 1$; but $X$ has rank two in $M / L = N$ so $\sqcap_M(X, L) = 1$. Therefore there is a unique element $\phi(X)$ of $L$ such that $\phi(X) \in \cl_M(X)$. As $M / L = N$ is simple, $X \cup \phi(X)$ is a circuit.

Let $\{a, b, c\}$ be a $3$-element subset of $E(P) \setminus L$.
Since $M / e \d f, x$ is a copy of $F_7$ using $\{u, v, w\} = L \setminus \{x\}$, each of the three $2$-element subsets of $\{a, b, c\}$ is contained in a triangle of $M / e$ along with a distinct element of $\{u, v, w\}$. Hence \[\{\phi(\{a, b, e\}), \phi(\{b, c, e\}), \phi(\{c, a, e\})\} = \{u, v, w\}.\]
So if $\phi(\{a, b, c\}) \in \{u, v, w\}$ then $\{a, b, c, e\}$ is a circuit, which is a contradiction since $\{a, b, c\}$ is independent in $M / e$. Therefore, $\phi(\{a, b, c\}) = x$ and so $\{a, b, c, x\}$ is a circuit of $M$. 
Since $\{a, b, c\}$ was an arbitrary $3$-element subset of $E(P) \setminus L$, the same is true for all such subsets, which implies that $r_M(\{x\} \cup (E(P) \setminus L)) = 3$.

Now suppose that $\{e, f\}$ is contained in a triangle of $M$. Since $M / L$ is simple, the third element of the triangle is in $E(N) \setminus L$; call it $a$. Then $M / e \d a, x$ is isomorphic to $M / e \d f, x$ under the map that swaps $a$ with $f$ and fixes all other elements. So by the argument of the last paragraph with $f$ in place of $a$, we conclude that $r_M(\{x, f\} \cup E(P) \setminus (L \cup \{a\})) = 3$.
The fact that both $\{x\} \cup (E(P) \setminus L)$ and $\{x, f\} \cup E(P) \setminus (L \cup \{a\})$ have rank three means that $\{f\} \cup (E(P) \setminus L)$ does too. Note that $\{f\} \cup (E(P) \setminus L) = E(N) \setminus \{e\}$. But $e \in \cl_M(\{a, f\})$ so we conclude that $r_M(E(N)) = 3$, a contradiction because $r_M(E(N)) = r(M)$. This proves (\ref{clm:nou26-notriangle}).
\\

Note that $M / e \d f$ has ground set $E(P) \cup L$ so it is simple. Since $\{e, f\}$ is not in a triangle of $M$, $M / e$ is also simple.
But this contradicts \autoref{lem:fanopluslinehasnoextension} which asserts that $E(M / e) = E(P) \cup L$.
\end{proof}

\begin{lemma} \label{lem:nou46}
 If $M$ is a $3$-connected matroid with a modular $4$-point line then $M$ has no $U_{4,6}$-minor.
\end{lemma}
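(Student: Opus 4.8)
The plan is to mimic the proof of \autoref{lem:nou26}. Suppose $M$ is a minor-minimal $3$-connected matroid with a modular $4$-point line $L=\{u,v,w,x\}$ and a $U_{4,6}$-minor $N$. By \autoref{lem:squishedminor} we may assume $E(M)=L\cup E(N)$, so that $N=M/C\setminus D$ for some partition $C\sqcup D=L\setminus E(N)$ of the elements of $M$ outside $N$, with $C$ independent, hence $|C|\le 2$. Reading off ranks, $r(M)=r(N)+|C|=4+|C|$; and since $|C|+|D|+|L\cap E(N)|=4$ and $|E(M)|=10-|L\cap E(N)|$, the corank of $M$ equals $2+|D|$. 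Also $r_M(E(N))=r(M)$.

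The first task is to reach the hypotheses of \autoref{lem:U25givesfano}. If $D=\emptyset$ then $C=L\setminus E(N)$, so $|C|=4-|L\cap E(N)|\ge 2$, forcing $|C|=2$; but then $C$ spans the line $L$, so the two elements of $L\cap E(N)$ are loops of $N=M/C$, contradicting that $N\cong U_{4,6}$ is loopless. Hence $D\neq\emptyset$ and $M$ has corank at least $3$. Since $M$ also has rank at least $4$ and, because $U_{4,6}/e\cong U_{3,5}$, a $U_{3,5}$-minor, \autoref{lem:U25U35} gives $M$ a $U_{2,5}$-minor, and then \autoref{lem:U25givesfano} gives $M$ an $F_7$-minor $P$ using three elements of $L$. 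In particular $3\le r(M)\le r(N)+r_M(L)=6$.

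Next I would pin down the configuration, using that $N$ is simple and loopless: if $|C|=0$ then $L\cap E(N)$ has rank $2$ as a restriction of $N\cong U_{4,6}$, forcing $|L\cap E(N)|=2$; if $|C|=1$ then $r_N(L\cap E(N))\le r_M(L)-|C|=1$, forcing $|L\cap E(N)|\le 1$; and if $|C|=2$ then $r_N(L\cap E(N))\le 0$, forcing $L\cap E(N)=\emptyset$. So $(|C|,|D|,|L\cap E(N)|)$ is one of $(0,2,2)$, $(1,2,1)$, $(1,3,0)$, $(2,2,0)$, and correspondingly $r(M)\in\{4,5,6\}$. In each case $M/C$ restricted to $E(N)$ is $U_{4,6}$, so every subset of $E(N)$ of size at most $4$ is independent in $M/C$, and hence independent in $M$ whenever it is disjoint from $L$. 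The remaining work is a counting argument against the modularity of $L$, which says every hyperplane of $M$ meets $L$. This is already visible in the case $(0,2,2)$, where $r(M)=4$, $N=M\setminus D$ with $D\subseteq L$ of size two, and $M\setminus D\cong U_{4,6}$: for each of the four $3$-element subsets $A$ of $E(N)\setminus L$, the flat $\cl_M(A)$ is a hyperplane avoiding $L\cap E(N)$, so it must contain one of the two elements of $D$; but if an element $f$ of $D$ lay in $\cl_M(A)\cap\cl_M(A')$ for two such sets (which meet in two elements $e,f'$), then $\cl_M(A)\cap\cl_M(A')=\cl_M(\{e,f'\})$ would be a line containing $f$, and together with $\cl_M(L\cap E(N))=L$ this would force $r_M(\{e,f'\}\cup(L\cap E(N)))\le 3$, contradicting the $U_{4,6}$-structure (those four elements of $E(N)$ are independent in $M$). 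Hence each element of $D$ lies in $\cl_M(A)$ for at most one $A$, and two elements cannot cover all four sets $A$ — a contradiction. The cases $r(M)\in\{5,6\}$ are handled by the same idea after first using $P$: writing $P=M/C_P\setminus D_P$ one has $|C_P|=r(M)-3$ and $|D_P|=\operatorname{corank}(M)-4\in\{0,1\}$, and since $L\setminus E(P)$ is a single element, $P$ controls how the partly-contracted line $L$ sits relative to the (essentially free) set $E(N)$; in particular a contraction that collapses part of $L$ to a parallel class is impossible because $F_7$ is simple.

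I expect the main obstacle to be exactly this last step, in the higher-rank configurations $(1,2,1)$, $(1,3,0)$, and $(2,2,0)$. There $L$ is partly contracted and partly deleted in forming $N$, so one has much weaker direct access to the modular line inside $M$, and the counting argument must be routed through the $F_7$-minor $P$; making $P$ interact cleanly with the contraction and deletion sets that realize $N$ — so that the hyperplane-covering count can be run — is where the bookkeeping is heaviest.
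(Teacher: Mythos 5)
Your bookkeeping of the triple $(|C|,|D|,|L\cap E(N)|)$ contains a false step that undermines the whole case analysis. You assert that $|C|=0$ forces $|L\cap E(N)|=2$ because ``$L\cap E(N)$ has rank $2$ as a restriction of $N\cong U_{4,6}$.'' Nothing you have written establishes this: if $|L\cap E(N)|\le 1$ it simply has rank $\le 1$, which is perfectly consistent with $U_{4,6}$, so the triples $(0,3,1)$ and $(0,4,0)$ are not ruled out. In fact the rank-4 case is exactly where the paper does its hardest work, and there the opposite of your claim holds: taking $a\in E(N)$ with $P$ a restriction of $M/a$, one shows first that the three elements of $E(P)\cap L$ cannot lie in $E(N)$ (since $N/a\cong U_{3,5}$ has no triangles, while any such element would lie in a triangle of $P$ contained in $E(N/a)$), and then Lemma~\ref{lem:fanopluslinehasnoextension} applied to $M/a$ forces $|E(M)|=9$, i.e.\ $|L\cap E(N)|=1$. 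So the case you work out in detail, $(0,2,2)$, does not occur, and the genuine case $(0,3,1)$ goes unaddressed.

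Moreover, your hyperplane-covering pigeonhole does not carry over to $(0,3,1)$: there you have ten $3$-subsets $A$ of $E(N)\setminus L$ and three elements of $D$, so some $f\in D$ lies in at least four of the hyperplanes $\cl_M(A)$, and if two of the relevant $A$'s meet in a pair $\{e,f'\}$ you conclude $r_M(\{e,f'\}\cup L)\le 3$; but now $\{e,f'\}\cup(L\cap E(N))$ has only three elements, and a $3$-element set of rank $3$ in $U_{4,6}$ is no contradiction. You need the detailed interaction with the $F_7$-minor $P$ (the paper fixes the triangles of $P$ and runs circuit elimination to land a circuit inside the independent set $\{a,b,c,d\}$), not a bare flat-covering count. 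You yourself flag the rank-5 and rank-6 subcases as ``the heaviest bookkeeping,'' and the sketch you offer there (``$P$ controls how the partly-contracted line sits'') is not yet an argument; the paper handles rank $6$ by a short pigeonhole on the six $5$-subsets of $E(N)$ followed by circuit elimination, and rank $5$ again by exploiting the explicit triangle structure of $P$. On the positive side, your observation that $D\neq\emptyset$, hence corank at least three, is what licenses Lemma~\ref{lem:U25U35}; that is correct, and more explicit than the paper's unexplained ``corank at least four.''
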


\begin{proof}
Let $M$ be a minor-minimal $3$-connected matroid with a modular $4$-point line, $L = \{u, v, w, x\}$, and a $U_{4,6}$-minor, $N$.
\autoref{lem:squishedminor} implies that $E(M) = L \cup E(N)$.

Note that since $M$ is $3$-connected, $L$ is coindependent and $r_M(E(N)) = r(M)$.
Also, $M$ has corank at least four. Then since $U_{4,6}$ has a $U_{3,5}$-minor \autoref{lem:U25U35} implies that $M$ has a $U_{2,5}$-minor. Thus \autoref{lem:U25givesfano} implies that $M$ has an $F_7$-minor $P$ that uses three elements of $L$.
As $r(M) \leq r(N) + r_M(L)$, we have $4 \leq r(M) \leq 6$.

\begin{claim} \label{clm:nou46-rank6}
 $r(M) < 6$.
\end{claim}

Suppose that $r(M) = 6$. Then $N = M / L$ and $r_M(E(N)) = 6$. Every $4$-element subset $Y$ of $E(N)$ is independent in $N$ and hence $\sqcap_M(Y, L) = 0$. But $\sqcap_M(E(N), L) = 2$, so for every $5$-element subset $X$ of $E(N)$ we have $\sqcap_M(X, L) = 1$, and so there is a unique element of $L$ contained in $\cl_M(X)$.
Since there are six $5$-element subsets of $E(N)$, there is an element $z \in L$ and two distinct $5$-element subsets $X_1, X_2 \subset E(N)$ such that $z \in \cl_M(X_1)$ and $z \in \cl_M(X_2)$. Since $z$ is not in the closure of any proper subset of $X_1$ or $X_2$, the two sets $X_1 \cup \{z\}$ and $X_2 \cup \{z\}$ are circuits. Hence $X_1 \cup X_2 = E(N)$ contains a circuit, a contradiction. This proves (\ref{clm:nou46-rank6}).

\begin{claim} \label{clm:nou46-rank5}
 $r(M) = 4$.
\end{claim}

If not, then $r(M) = 5$. Hence there are two elements $a, b \in E(N)$ such that $M / a, b$ has $P$ as a restriction. Since $|E(P) \setminus L| = 4$, this means that $E(N)$ is disjoint from $L$.
We also know that there is an element $z \in L$ such that $M / z$ has $N$ as a restriction; we may assume that $M / x$ has $N$ as a restriction.

Every $4$-element subset $X$ of $E(N)$ is independent in $M / x$ so $x \not\in \cl_M(X)$. So the modularity of $L$ implies that for each such set $X$, $\cl_M(X)$ contains exactly one of $u, v,$ or $w$. Hence no triangle of $M / a, b$ contains $x$ and so  $u, v, w \in E(P)$.
We denote by $\{c, d, e, f\}$ the set $E(N) \setminus \{a, b\}$. We note that this set is a circuit of $P$. By symmetry, we may assume that the triangles of $P$ are
\[ \{c, d, u\}, \{c, e, v\}, \{c, f, w\}, \{d, e, w\}, \{d, f, v\}, \{f, e, u\}, \{u, v, w\} .\]

By symmetry we may assume that $u$ is the element of $L$ contained in $\cl_M(\{b, c, d, e\})$. Hence there is a circuit $C_1$ with $u \in C_1 \subseteq \{b, c, d, e, u\}$.
Also, as $\{c, d, u\}$ is a circuit of $M / a, b$, there is a circuit $C_2$ of $M$ with $u \in C_2 \subseteq \{a, b, c, d, u\}$.
This implies that $\{a, b, c, d, e\}$ contains a circuit, contradicting the fact that $\{c, d, e\}$ is independent in $M / a, b$. This proves (\ref{clm:nou46-rank5}).
\\

Since $r(M) = 4$, there is an element $a \in E(N)$ such that $M / a$ has $P$ as a restriction. Since $N / a \cong U_{3,5}$, $N / a$ has no triangles. So $E(P) \cap L$ is disjoint from $E(N)$. But \autoref{lem:fanopluslinehasnoextension} implies that $|E(M / a) \setminus L| = 4$ so $L$ contains exactly one element of $E(N)$, and this element is not in $E(P)$. We may assume that $x \in E(N) \cap L$ and $x \not\in E(P)$.

We denote the set $E(N) \setminus \{a, x\}$ by $\{b, c, d, e\}$. By symmetry, we may assume that the triangles of $P$ are
\[ \{b, c, u\}, \{b, d, v\}, \{b, e, w\}, \{c, d, w\}, \{d, e, u\}, \{e, c, v\}. \]

Since $\{b, c, u\}$ is a circuit of $M / a$, there is a circuit $C_1$ of $M$ with $u \in C_1 \subseteq \{a, b, c, u\}$.
Note that $\sqcap_M(\{b, c, d\}, L) \geq 1$, so some element of $L$ is in $\cl_M(\{b, c, d\})$. It is not $x$, as $x \in E(N)$ and $\{b, c, d, x\}$ is independent. So we may assume by symmetry that $u \in \cl_M(\{b, c, d\})$. Then there is a circuit $C_2$ of $M$ with $u \in C_2 \subseteq \{b, c, d, u\}$. Hence $(C_1 \cup C_2) \setminus \{u\} = \{a, b, c, d\}$ contains a circuit of $M$, a contradiction.
\end{proof}

Finally, we consider $P_6$-minors. We actually prove that a $3$-connected matroid with a modular $4$-point line and a $P_6$-minor has an $F_7^-$-minor. As the proof is much longer than those for $U_{2,6}$ and $U_{4,6}$, it is divided into two lemmas. First, we prove that a minor-minimal $3$-connected matroid with a modular $4$-point line and a $P_6$-minor has rank four. Then we show that it has an $F_7^-$-minor. 

\begin{lemma} \label{lem:p6rank4}
 If $M$ is a minor-minimal $3$-connected matroid with a modular $4$-point line and a $P_6$-minor, then $r(M) = 4$.
\end{lemma}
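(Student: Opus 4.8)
The plan is to use \autoref{lem:squishedminor} to reduce to the case $E(M) = L \cup E(N)$ with $N \cong P_6$, and then to determine $r(M)$. Since $M$ is $3$-connected, \autoref{lem:squishedminor} also gives $L \subseteq \cl_M(E(N))$, so $E(N)$ spans $M$; writing $N = M/C \setminus D$ with $C$ independent, we get $C \subseteq \cl_M(E(N)) = E(M)$, and as $C$ is disjoint from $E(N)$, in fact $C \subseteq L$. Hence $r(M) = r(N) + |C| = 3 + |C|$ with $|C| \leq r_M(L) = 2$, so $r(M) \in \{3,4,5\}$. Moreover $P_6$ has a $U_{2,5}$-minor (contract a point of the complementary triad), so \autoref{lem:U25givesfano} gives an $F_7$-minor $P$ of $M$ using three elements of $L$; let $\ell_0$ be the element of $L$ with $\ell_0 \notin E(P)$. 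It remains to rule out $r(M) = 3$ and $r(M) = 5$.

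If $r(M) = 3$, then $P$ is a restriction of $M$, so $M \setminus \ell_0$ has $P$ as an $F_7$-restriction using three elements of $L$, and \autoref{lem:fanopluslinehasnoextension} gives $M \setminus \ell_0 = P$, whence $|E(M)| = 8$ and $|L \cap E(N)| = 4 + 6 - 8 = 2$. If $\ell_0 \notin E(N)$ then $E(N) \subseteq E(P)$, so $N$ is a six-element restriction of $F_7$; but deleting any single point from $F_7$ leaves four triangles, while $P_6$ has exactly one. If $\ell_0 \in E(N)$ then $E(N) \setminus \{\ell_0\} \subseteq E(P)$, so $N \setminus \ell_0$ is a five-element restriction of $F_7$, hence has two triangles, whereas $P_6 \setminus \ell_0$ has at most one. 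Either way we reach a contradiction, so $r(M) \neq 3$.

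Finally suppose $r(M) = 5$. Then $|C| = 2$ and $C$ spans $L$, so $D = L \setminus C$ consists of the loops of $M/C$ and $N = M/C \setminus (L \setminus C) = M/L$; thus $E(N) = E(M) \setminus L$, $|E(M)| = 10$, and $M/L \cong P_6$. Take $P = M/C' \setminus D'$ with $C'$ independent and $D'$ coindependent, so $|C'| = r(M) - r(P) = 2$. Since $C'$ is disjoint from $E(P) \supseteq L \setminus \{\ell_0\}$, we have $C' \cap L \subseteq \{\ell_0\}$, and if $\ell_0 \in C'$ then the elements of $L \setminus \{\ell_0\}$ become three pairwise parallel elements of the simple matroid $P$, which is absurd; hence $C' = \{a,b\} \subseteq E(N)$ and $D' = \{\ell_0\}$, so $P = M/\{a,b\} \setminus \ell_0$. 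Using the modularity of $L$ and $r_{M/L}(\{a,b\}) = r_{P_6}(\{a,b\}) = 2$, one checks that $L$ is a modular $4$-point line of $M/\{a,b\}$, and that $M/\{a,b\}$ is simple --- otherwise its unique nontrivial parallel class pairs $\ell_0$ with an element of $E(N)$, forcing $\si(M/\{a,b\}) = (M/\{a,b\}) \setminus \ell_0 = P \cong F_7$ to contain a $4$-point line. So $M/\{a,b\}$ is exactly the simple rank-$3$ matroid obtained from $F_7$ by adding a point on one of its triangles, and its circuits are completely determined: $E(N) \setminus \{a,b\}$ is a four-element circuit of $M/\{a,b\}$, and each of the six triangles of $P$ other than $L \setminus \{\ell_0\}$ has the form $\{z_1, z_2, \ell\}$ with $z_1, z_2 \in E(N) \setminus \{a,b\}$ and $\ell \in L \setminus \{\ell_0\}$, giving $r_M(\{z_1, z_2, \ell, a, b\}) = 4$. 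I would then play these rank equalities against the structure of $M/L \cong P_6$ --- which pins down which three-element subsets of $E(N)$ are independent in $M$, which of them have an element of $L$ in their closure, and which four-element subsets of $E(N)$ are dependent --- and apply the circuit elimination axiom to the resulting short circuits to extract a circuit inside a set that is independent in $P_6$, a contradiction. Hence $r(M) = 4$.

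The hard part is this rank-$5$ case. Unlike the $U_{2,6}$- and $U_{4,6}$-arguments, where the excluded minor forbids almost all small circuits, $P_6$ permits many small dependencies, so the contradiction requires carefully reconciling the two quotient descriptions $M/L \cong P_6$ and $M/\{a,b\} \cong F_7$-plus-a-point-on-a-triangle, keeping track simultaneously of the ranks of three- and four-element subsets of $E(N)$ and of which elements of $L$ lie in their closures. It is this bookkeeping, rather than any one clever step, that makes the $P_6$ analysis longer than the earlier lemmas and motivates isolating the rank computation here.
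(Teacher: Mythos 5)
Your reduction via Lemma~\ref{lem:squishedminor} to $E(M)=L\cup E(N)$ with $r(M)\in\{3,4,5\}$, and your invocation of Lemma~\ref{lem:U25givesfano} to get the $F_7$-minor $P$, match the paper exactly. Your elimination of $r(M)=3$ is correct and essentially the paper's, though you count triangles of restrictions of $F_7$ where the paper instead observes that both $5$-element deletions of $P_6$ (namely $U_{3,5}$ and $U_{2,4}\oplus_2 U_{2,3}$) are non-binary; either is fine. The preliminary work in the $r(M)=5$ case is also sound: you correctly conclude $N=M/L$, that the contraction set $C'$ achieving $P$ must lie in $E(N)$, and that $M/\{a,b\}$ is the simple rank-$3$ extension of $F_7$ by a point $\ell_0$ on the triangle $L\setminus\{\ell_0\}$ (with Lemma~\ref{lem:fanopluslinehasnoextension} showing there are no further elements).

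However, the proof stops there. The entire remaining content of the lemma — actually deriving a contradiction when $r(M)=5$ — is deferred to ``one checks,'' ``I would then play these rank equalities,'' and ``apply the circuit elimination axiom to the resulting short circuits.'' You explicitly acknowledge that this is the hard part, and indeed it is: in the paper the rank-$5$ case occupies three nontrivial claims (showing the triad $T^*$ of $N$ is a flat of $M$, that the three elements $\phi(d),\phi(e),\phi(f)$ of $L$ are distinct, and that at most one element of the contraction pair lies in $T^*$) followed by a further page of case analysis built on where the circuits of $M$ hitting $L$ can live. None of that bookkeeping is done here, and it is not obvious that your alternate accounting via the two quotients $M/L\cong P_6$ and $M/\{a,b\}$ converges to a contradiction without an argument of comparable length — $P_6$ is permissive enough that many circuit configurations must be considered. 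As it stands, this is a plan for a proof of the $r(M)=5$ case, not a proof, so the lemma is not established.
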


\begin{proof}
Let $M$ be a minor-minimal $3$-connected matroid with a modular $4$-point line, $L = \{u, v, w, x\}$, and a $P_6$-minor, $N$.
\autoref{lem:squishedminor} implies that $E(M) = L \cup E(N)$. Thus $3 \leq r(M) \leq r(N) + r_M(L) = 5$.

Note that since $M$ is $3$-connected, $r_M(E(N)) = r(M)$.
Since $P_6$ has a $U_{2,5}$-minor, we can apply \autoref{lem:U25givesfano} and conclude that $M$ has an $F_7$-minor $P$ that uses three elements of $L$.

\begin{claim} \label{clm:nop6-rank3}
 $r(M) > 3$.
\end{claim}

Suppose that $r(M) = 3$. Then $M$ has both $N$ and $P$ as restrictions. We may assume that $x$ is the element of $L \setminus E(P)$. \autoref{lem:fanopluslinehasnoextension} implies that $E(M) = E(P) \cup \{x\}$. Thus some matroid obtained from $N \cong P_6$ by deleting at one most element is a restriction of $M \d x = P$.
Up to isomorphism, there are two matroids obtainable by deleting an element from $P_6$: $U_{3,5}$ and the $2$-sum $U_{2,4} \oplus_2 U_{2,3}$.
Both are non-binary so cannot be isomorphic to restrictions of $F_7$. This proves (\ref{clm:nop6-rank3}).
\\

We may assume that $r(M) = 5$, so we have $N = M / L \cong P_6$. Note that $E(N)$ is disjoint from $L$.
We recall that $M$ has an $F_7$-minor $P$ using three elements of $L$; hence $P$ is a restriction of $M / Z$ for some $2$-element subset $Z \subset E(N) \setminus L$.

We denote by $T^* = \{a, b, c\}$ the triad of $N$ and by $T = \{d, e, f\}$ the triangle of $N$.
We note that $T^*$ is also a triad of $M$. As $P$ is cosimple and has no triads, $M / Z$ has no triads either. This means that $T^* \cap Z \neq \emptyset$, so we may assume that $a \in Z$. We write $a'$ for the element of $Z \setminus \{a\}$.

\begin{claim} \label{clm:nop6-flat}
 $T^*$ is a flat of $M$.
\end{claim}

Since $T^*$ is independent in $M / L$, $\cl_M(T^*)$ contains no element of $L$; so if $T^*$ is not a flat then we may assume that $d \in \cl_M(T^*)$. Then $r_{M/a}(\{b, c, d\}) = 2$. 
Note that $M / a, a'$ is simple, because $L$ is a line of $M / a, a'$ and one element of $L$ is deleted from $M / a, a'$ to get $P$.
This implies that $a' \not\in \{b, c, d\}$. So $\{b, c, d\}$ is a triangle of $M / a,a'$ and hence of $P$. This is a contradiction because $P$ has no triangle disjoint from $L$, as $F_7$ has no pair of disjoint triangles. This proves (\ref{clm:nop6-flat}).
\\

It follows from (\ref*{clm:nop6-flat}) that for each $g \in T$, $r_M(T^* \cup \{g\}) = 4$. Hence $\sqcap_M(T^* \cup \{g\}, L) \geq 1$; but $T^*$ is independent in $M / L = N$ so we have $\sqcap_M(T^* \cup \{g\}, L) = 1$.
This means (by the modularity of $L$) that for each $g \in T$, there is a unique element $\phi(g)$ of $L$ such that $\phi(g) \in \cl_M(T^* \cup \{g\})$.

\begin{claim} \label{clm:nop6-twophis}
 $\phi(d)$, $\phi(e)$, and $\phi(f)$ are all distinct.
\end{claim}

We cannot have $\phi(d) = \phi(e) = \phi(f)$, otherwise $(E(N), L)$ would be a $2$-separation of $M$.
So we may assume that $\phi(d) = \phi(e) = x$ and $\phi(f) = w$. 
So $d, e \in \cl_{M / x}(T^*)$ and $r_M(T^* \cup \{d, e, x\}) = 4$. Since $M$ is $3$-connected, at least three elements of $\{f, u, v, w\}$ are disjoint from the hyperplane $H = \cl_M(T^* \cup \{d, e, x\})$. So $H$ does not contain any element of $L$ other than $x$. It also does not contain $f$ because $w \in \cl_M(T^* \cup \{f\})$. Thus $T^* \cup \{d, e, x\}$ is a flat of $M$.

We note that $\{d, e, x\}$ is independent in $M$ since $\{d, e\}$ is independent in $M / L$.
If for every $2$-element subset $Y_1 \subset T^*$ and every $2$-element subset $Y_2 \subset \{d, e, x\}$, $Y_1 \cup Y_2$ is independent, then $M | (T^* \cup \{d, e, x\}) \cong U_{4,6}$, contradicting \autoref{lem:nou46}. Hence we may assume that there are $2$-element sets $Y_1 \subset T^*$ and $Y_2 \subset \{d, e, x\}$ such that $Y_1 \cup Y_2$ is a circuit.

Note that no element of $T^*$ is in a triangle of $M / L = N$ so $x \not\in Y_2$, hence $Y_2 = \{d, e\}$.
Also, as $M / a,a'$ has no $2$- or $3$-element circuits disjoint from $L$, neither $a$ nor $a'$ is contained in $Y_1 \cup Y_2$. So $Y_1 = \{b, c\}$ and $a' = f$.

Since $w \in \cl_M(T^* \cup \{f\})$, $w \in \cl_{M / a, f}(\{b, c\})$ and $\{b, c, w\}$ is a triangle of $P$. 
No two triangles of $P$ are disjoint, so the triangle of $P$ containing $\{d, e\}$ contains an element of $\{b, c, w\}$ and of $L$, which means that $\{d, e, w\}$ is a triangle of $P$. So there is a circuit $C$ of $M$ such that $\{d, e, w\} \subseteq C \subseteq \{d, e, a, f, w\}$. As the intersection of a circuit and a cocircuit in a matroid is never equal to one, $|C \cap T^*| \neq 1$ so $a \not\in C$. Hence one of $\{d, e, w\}$ or $\{d, e, w, f\}$ is a circuit of $M$, so $r_{M / f}(\{d, e, w\}) \leq 2$; as $M / f, a$ is simple, this means $\{d, e, w\}$ is a triangle in $M / f$. 
Recall that $\{b, c, d, e\}$ is a circuit of $M$; as $f$ is not contained in the flat $T^* \cup \{d, e, x\}$ of $M$, $\{b, c, d, e\}$ is also a circuit of $M / f$. Then since $\{d, e, w\}$ is a triangle of $M / f$, one of $\{b, c, w\}$ or $\{b, c, e, w\}$ is a circuit of $M / f$. But if $\{b, c, w\}$ is a circuit of $M / f$, then $r_{M/w}(\{b, c, f\}) = 2$, a contradiction because no element of $T^*$ is contained in a triangle of the simple matroid $M / L = N \cong P_6$. So $\{b, c, e, w\}$ is a circuit of $M / f$.
Since $\{b, c, e, w\}$ has rank three in $P$ it has rank three in $M / a, f$, so $a \not\in \cl_{M / f}(\{b, c, e, w\})$. But then the fact that $\{b, c, e, w\}$ is a circuit of $M / f$ implies that it is a circuit of $M / a, f$. This is a contradiction because $\{b, c, w\}$ is a triangle of $P$ and hence of $M / a, f$. This proves (\ref{clm:nop6-twophis}).
\\

We may therefore assume that $\phi(d) = u$, $\phi(e) = v$ and $\phi(f) = w$.

\begin{claim} \label{clm:nop6-contracttwooftriad}
 $a' \in T$.
\end{claim}

If not, then $a' \in T^*$ and we may assume that $a' = b$ so $M / a, b$ has $P$ as a restriction. Moreover, $\{c, d, u\}$, $\{c, e, v\}$ and $\{c, f, w\}$ are triangles of $M / a, b$ so $P = M / a, b \d x$.
This means that the remaining triangles of $P$ are $\{d, e, w\}$, $\{e, f, u\}$, $\{f, d, v\}$, and $\{u, v, w\}$.

Since $T = \{d, e, f\}$ is not a triangle of $P$, it is also independent in $M$. The hyperplane $L \cup T$ has rank four, so $\sqcap_M(T, L) = 1$. So there is a unique element $z \in L$ such that $z \in \cl_M(T)$. Since $T$ contains no parallel pairs in $M / L$, the set $\{d, e, f, z\}$ contains no triangles of $M$ and is thus a circuit.

As $T = \{d, e, f\}$ is independent in $P$, we have $\sqcap_M(\{d, e, f\}, \{a, b\}) = 0$, which means that $\{d, e, f, z\}$ is also a circuit of $M / a,b$. This implies that $z \not\in \{u, v, w\}$, as $\{d, e, w\}$, $\{e, f, u\}$, and $\{f, d, v\}$ are triangles of $M / a, b$. So $z = x$, meaning that $\{d, e, f, x\}$ is a circuit of $M$.

The argument of the following paragraph holds also with $a$ replaced by $b$.
The set $\{a, c, d, e\}$ has rank four in $M$ because $\{c, d, e\}$ has rank three in $P$ and $M / a$. Hence $\sqcap_M(\{a, c, d, e\}, L) \geq 1$. The flat $\cl_M(\{a, c, d, e\})$ is a hyperplane of $M$, which is $3$-connected, so at least three elements of $M$ are disjoint from it; hence it contains at most one element of $L$. So $\sqcap_M(\{a, c, d, e\}) = 1$ and there is a unique element $y$ of $L$ in $\cl_M(\{a, c, d, e\})$. Now there is a circuit $C$ of $M$ with $y \in C \subseteq \{y, a, c, d, e\}$. Since $M / L$ is simple, $C \neq \{y, d, e\}$, so $C$ contains an element of $\{a, c\}$. But $T^*$ is a cocircuit and cannot intersect any circuit in exactly one element; hence $a, c \in C$. As $\{a, c\}$ is neither a parallel pair nor contained in a triangle of $M / L = N$, we conclude that $C = \{y, a, c, d, e\}$.
So $\{y, c, d, e\}$ is a circuit of $M / a$. As $\{c, d, e\}$ has rank three in $P$ (and in $M / a,b$), $b \not\in \cl_{M / a}(\{y, c, d, e\})$ and $\{y, c, d, e\}$ is a circuit of $M / a, b$. 
This means that $y \not\in \{u, v, w\}$, because $\{c, d, u\}$, $\{c, e, v\}$ and $\{d, e, w\}$ are triangles of $M / a,b$. Therefore, $y = x$ and $C = \{x, a, c, d, e\}$ is a circuit of $M$.
We note that by symmetry between $a$ and $b$, this also proves that $\{x, b, c, d, e\}$ is a circuit of $M$.

The fact that $\{d, e, f, x\}$ and $\{x, a, c, d, e\}$ are circuits means that there is a circuit $Y$ of $M$ such that $Y \subseteq \{a, c, d, e, f\}$. Moreover, $Y$ contains $f$ and at least one of $a$ or $c$; but $\{a, b, c\}$ is a cocircuit of $M$ so $a, c \in Y$.
As $M / a$ is simple, $Y \neq \{a, c, f\}$, and as neither $\{c, f, d\}$ nor $\{c, f, e\}$ is a triangle of $P$ or of $M / a$, $Y$ is not equal to $\{a, c, f, d\}$ or $\{a, c, f, e\}$. So $Y = \{a, c, d, e, f\}$ is a circuit. By the same argument with $a$ replaced by $b$, the fact that $\{x, b, c, d, e\}$ is a circuit means $\{b, c, d, e, f\}$ is also a circuit. 

Therefore, $r_M(\{a, b, c, d, e, f\}) = 4$, contradicting the fact that $r_M(E(N)) = r(M) = 5$. This proves (\ref{clm:nop6-contracttwooftriad}).
\\

We may assume that $a' = f$, so $M / a, f$ has $P$ as a restriction.
Then $\{b, c, w\}$ is a triangle of $M / a, f$ and of $P$. The triangle of $P$ containing $\{d, e\}$ meets both $\{b, c, w\}$ and $L$, so it is $\{d, e, w\}$.

The argument of this paragraph holds also with $c$ replaced by $b$.
We note that $r_M(\{a, c, d, e\}) = 4$ because $\{c, d, e\}$ is independent in $P$ and thus in $M / a$. So $\sqcap_M(\{a, c, d, e\}, L) \geq 1$. But $\{a, c, d, e\}$ has rank three in $N = M / L$ so we have $\sqcap_M(\{a, c, d, e\}) = 1$. Hence there is a unique element $z$ of $L$ such that $z \in \cl_M(\{a, c, d, e\})$, and there is a circuit $C$ such that $z \in C \subseteq \{z, a, c, d, e\}$. 
Since $\{a,c,d,e\}$ is a circuit of $N = M / L$, this circuit $C$ of $M$ must equal $\{z, a, c, d, e\}$.
By symmetry between $c$ and $b$, we also conclude that for some $z' \in L$, $\{z', a, b, d, e\}$ is a circuit.

Since $\{z, c, d, e\}$ is a circuit of $M / a$ and $\{c, d, e\}$ is independent in $M / a, f$, the set $\{z, c, d, e\}$ is also a circuit of $M / a, f$. If $z \in E(P)$ then $E(P) \setminus \{z, c, d, e\}$ consists of $b$ and two elements of $L$, a contradiction because each $4$-element circuit of $F_7$ is the complement of a triangle. So $z \not\in E(P)$.
The same argument applies to the circuit $\{z', b, d, e\}$ of $M / a$ and we conclude that $z' \not\in E(P)$. Therefore, as $L \setminus E(P)$ has only one element, $z = z'$ and both $\{z, a, c, d, e\}$ and $\{z, a, b, d, e\}$ are circuits of $M$.

Therefore, $\{a, b, c, d, e\} = T^* \cup \{d, e\}$ contains a circuit of $M$. So $r_M(T^* \cup \{d, e\}) \leq 4$. But $\cl_M(T^* \cup \{d, e\})$ contains $\{u, v\}$ and thus all of $L$. Since $f$ is not a coloop, it follows that $\cl_M(T^* \cup \{d, e\}) = E(M)$, contradicting the fact that $r(M) = 5$.
\end{proof}

Recall that two sets $A$ and $B$ in a matroid $M$ are called \defn{skew} if $\sqcap_M(A, B) = 0$.

\begin{lemma} \label{lem:nop6}
 If $M$ is a $3$-connected matroid with a modular $4$-point line and a $P_6$-minor, then $M$ has an $F_7^-$-minor.
\end{lemma}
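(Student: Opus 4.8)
The plan is to combine Lemma~\ref{lem:p6rank4} with Lemma~\ref{lem:squishedminor} to reduce to a very small case: a minor-minimal $3$-connected matroid $M$ with a modular $4$-point line $L=\{u,v,w,x\}$ and a $P_6$-minor $N$, where $E(M)=L\cup E(N)$, $r(M)=4$, and $r_M(E(N))=r(M)=4$. Since $r_M(L)=2$, we get $r(N)=3$, so $N$ is a rank-$3$ restriction-type minor: there is a single element $a\in E(N)$ (or none) with $M/a$ having $N$ as a restriction, or $N=M/L$ is impossible on rank grounds, so in fact $L$ and $E(N)$ must overlap or $M/a\supseteq N$ for one contracted element. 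I would first pin down exactly how $N$ sits inside $M$: because $P_6$ has a $U_{2,5}$-minor, Lemma~\ref{lem:U25givesfano} gives an $F_7$-minor $P$ using three elements of $L$, and since $r(M)=4$ this $F_7$ is a restriction of $M/a$ for some single $a\in E(N)\setminus L$ (using Lemma~\ref{lem:fanopluslinehasnoextension} to control $E(M/a)$). So after contracting one well-chosen element we land in rank $3$ with an $F_7$-restriction on $L\cup\{$three others$\}$, exactly the configuration of Lemma~\ref{lem:fanopluslinehasnoextension}, which forces $E(M/a)=E(P)\cup\{x\}$ where $x$ is the point of $L$ not in $P$.

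**Producing the $F_7^-$.** With $E(M)=E(P)\cup\{x,a\}$, I want to show $M/a$ (or a slightly modified contraction) is itself $F_7^-$, or produce $F_7^-$ by one more deletion. The non-Fano $F_7^-$ is obtained from $F_7$ by relaxing a circuit-hyperplane, i.e.\ it is a rank-$3$ matroid on seven points with exactly one ``missing'' line; equivalently one triple that is a line of $F_7$ becomes independent. The strategy is: the element $a$ we contract is not parallel (in $M$) to anything, and $M$ is not ternary-forced yet, so in $M$ the element $x\in L$ together with the $F_7$-structure on $E(P)$ must create a genuinely non-Fano rank-$3$ picture. Concretely, I would look at the $4$-point line $L$ in the rank-$4$ matroid $M$: its three elements $u,v,w$ lying in $P$ form a line of $F_7$ in $M/a$, but $x$ is the fourth point. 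I would analyze which triples of $E(P)\setminus L$ span the various points of $L$ via the local-connectivity/modularity bookkeeping (as in the $\phi$-function arguments of the earlier lemmas), and show that the point $x$ of $L$ forces one of the ``lines'' of the putative $F_7$ on $E(P)\cup\{x\}$ to be only rank $3$ — i.e.\ a non-line — in an appropriate single-element contraction of $M$. That contraction will then be $F_7^-$.

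**The main obstacle.** The hard part will be handling the ambiguity in how $N\cong P_6$ embeds and in which element to contract: $P_6$ has a distinguished triangle $T$ and triad $T^\ast$, and the interaction between $T^\ast$ (which stays a triad of $M$ and cannot survive into $F_7$, which has no triads) and the contracted element forces $T^\ast$ to meet the contraction, as in the proof of Lemma~\ref{lem:p6rank4}. Keeping track of this while also tracking the modular line $L$ and ensuring the final single-element minor is exactly $F_7^-$ (and not, say, $F_7$ or something with a $U_{2,6}$-minor, which Lemma~\ref{lem:nou26} already rules out, or $U_{4,6}$, ruled out by Lemma~\ref{lem:nou46}) will require a careful case split on the positions of $a$, $x$, and the triad relative to the lines of $P$. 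I expect the bulk of the proof to be this case analysis, using circuit elimination repeatedly (as flagged in the text), with the skew-sets language introduced just before the statement to phrase the rank computations cleanly; the payoff is that once the configuration is pinned down, exhibiting the relaxed circuit-hyperplane witnessing $F_7^-$ is immediate.
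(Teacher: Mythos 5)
Your first two paragraphs (invoke Lemma~\ref{lem:p6rank4} and Lemma~\ref{lem:squishedminor} to get $r(M)=4$ and $E(M)=L\cup E(N)$, then Lemma~\ref{lem:U25givesfano} to get an $F_7$-minor $P$ using three elements of $L$) match the paper exactly, and so does the general strategy of pinning down the modular-line bookkeeping via a $\phi$-style argument. But the reduction you attempt in the middle is wrong, and the error propagates: you claim that Lemma~\ref{lem:fanopluslinehasnoextension} ``forces $E(M/a)=E(P)\cup\{x\}$,'' and hence that $E(M)=E(P)\cup\{x,a\}$ has $9$ elements. In fact $E(N)$ is disjoint from $L$ here, so $|E(M)|=|L|+|E(N)|=10$. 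Writing $e$ for the element contracted to get $P$ and $Y=E(P)\setminus L=\{a,b,c,d\}$, there remains a sixth element $f\in E(N)\setminus(Y\cup\{e\})$ that you have not accounted for. What Lemma~\ref{lem:fanopluslinehasnoextension} actually gives, applied to $M/e\backslash f$ (a simple rank-$3$ matroid with an $F_7$-restriction), is that $M/e$ cannot be simple; $f$ must lie in a parallel pair of $M/e$, so $\{e,f\}$ is contained in a triangle of $M$. This is the crux of the proof, not a step you can skip by asserting $E(M/a)=E(P)\cup\{x\}$.

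The consequence is that your endgame is aimed at the wrong minor. You expect a single contraction of $M$ to land on $F_7^-$; but a $10$-element rank-$4$ matroid needs one contraction and two deletions to reach a $7$-element rank-$3$ matroid, and, more importantly, the element to contract is $f$, not $e$ (or your $a$). The paper's conclusion is $M/f\backslash e,x\cong F_7^-$. Getting there requires establishing several facts you have not sketched and that do not follow from the outline as stated: that the triangle through $\{e,f\}$ has its third point in $L$ (this is the content of the paper's claim ``any triangle of $M$ containing $\{e,f\}$ is disjoint from $Y$''); that $f\notin\cl_M(\{a,b,c,d\})$ and $f\notin\cl_M(L\cup Z)$ for the two ``heavy'' pairs $Z$; and that exactly two of the six $2$-element subsets of $Y$ are non-skew to $L$ in $M$, and those two are disjoint. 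Only with all of this in hand can one conclude that $\{a,b,c,d\}$ is a circuit of $M/f$, each pair inside it closes onto $L$, and exactly one of those six closures hits $x$, giving exactly five hitting $\{u,v,w\}$ — i.e.\ one relaxed line, hence $F_7^-$. Your plan reaches for the right tools but collapses the ground set prematurely and so misses both the key element $f$ and the specific claims needed to control it.
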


\begin{proof}
We let $M$ be a minor-minimal $3$-connected matroid with a modular $4$-point line, $L = \{u, v, w, x\}$, and a $P_6$-minor, $N$. \autoref{lem:p6rank4} asserts that $r(M) = 4$. By \autoref{lem:squishedminor}, we may assume that $E(M) = L \cup E(N)$. Also, \autoref{lem:U25givesfano} implies that $M$ has an $F_7$-minor, $P$, using three elements of $L$.

We may assume that $M / x$ has $N$ as a restriction. Also, there is an element $e \in E(N)$ such that $M / e$ has $P$ as a restriction.

If $x \in E(P)$, then there are two triangles $T_1$ and $T_2$ of $P$ that contain $x$ and no other element of $L$. Then $T_1 \cup \{e\}$ and $T_2 \cup \{e\}$ both have rank three in $M$, so $(T_1 \cup \{e\}) \setminus \{x\}$ and $(T_2 \cup \{e\}) \setminus \{x\}$ both have rank two in $M / x$ and hence in $N$, a contradiction because $N \cong P_6$ is simple and has only one triangle. Therefore, $x \not\in E(P)$, and $E(P) \cap L = \{u, v, w\}$.

We let $Y = E(P) \setminus L$ and denote its elements by $\{a, b, c, d\}$. We let $f$ denote the element of $E(N) \setminus \{a, b, c, d, e\}$.
We let $\mathcal{Z}$ be the set of $2$-element subsets of $Y$.
For each $Z \in \mathcal{Z}$, either $Z$ and $L$ are skew in $M$ or $\sqcap_M(Z, L) = 1$.

\begin{claim} \label{clm:nop6-twopairs}
 There are distinct sets $Z, Z' \in \mathcal{Z}$ such that $\sqcap_M(Z, L) = \sqcap_M(Z', L) = 1$.
\end{claim}

We suppose that there is at most one $Z \in \mathcal{Z}$ such that $\sqcap_M(Z, L) = 1$. If such a $Z$ exists, we may assume that it equals $\{a, b\}$.
We set $X_1 = \{a, c, d\}$ and $X_2 = \{b, c, d\}$. Then every $2$-element subset of $X_1$ or of $X_2$ is skew to $L$. But $X_1$ and $X_2$ both have rank three, so $\sqcap_M(X_1, L) = \sqcap_M(X_2, L) = 1$.
Then there are elements $y_1, y_2 \in L$ such that $X_1 \cup \{y_1\}$ and $X_2 \cup \{y_2\}$ are circuits of $M$.
Since $X_1$ and $X_2$ also have rank three in $M / e$, $e \not\in \cl_M(X_1)$ and $e \not\in \cl_M(X_2)$. Hence $X_1 \cup \{y_1\}$ and $X_2 \cup \{y_2\}$ are circuits of $M / e$. As the only circuit of $P$ containing $X_1$ or $X_2$ is $Y$, $y_1$ and $y_2$ are not in $P$ so $y_1 = y_2 = x$.
Therefore, $X_1 \cup \{x\}$ and $X_2 \cup \{x\}$ are circuits of $M$. Then $X_1$ and $X_2$ are triangles of $M / x$ and hence of $N$, contradicting the fact that $P_6$ has only one triangle.
This proves (\ref{clm:nop6-twopairs}).

\begin{claim} \label{clm:nop6-twotriangles}
 There is a unique pair of sets $Z, Z' \in \mathcal{Z}$ such that $\sqcap_M(Z, L) = \sqcap_M(Z', L) = 1$. Moreover, $Z$ and $Z'$ are disjoint.
\end{claim}

By (\ref{clm:nop6-twopairs}) there are distinct sets $Z, Z' \in \mathcal{Z}$ such that $\sqcap_M(Z,L) = \sqcap_M(Z',L) = 1$. 
Since $|Y| = 4$, if (\ref{clm:nop6-twotriangles}) does not hold then we can choose $Z$ and $Z'$ so that $Z \cap Z' \neq \emptyset$.
We may assume that $Z = \{a, b\}$ and $Z' = \{a, c\}$. Now the set $L \cup \{a, b, c\}$ has rank three in $M$, so its closure is a hyperplane. As $M$ is $3$-connected, every cocircuit has size at least three. Since $|L \cup \{a, b, c\}| = 7$ and $|E(M)| \leq |L| + |E(N)| = 10$, $L \cup \{a, b, c\}$ is in fact a hyperplane and its complement, $\{d, e, f\}$ is a triad.
Note that $M / e \d f$ has $P$ as a restriction, so \autoref{lem:fanopluslinehasnoextension} implies that $f$ is in a parallel pair of $M / e$. As $\{d, e\}$ is a series pair of $M \d f$, $M / d \d f$ is isomorphic to $M / e \d f$, and it also has an $F_7$-restriction. So \autoref{lem:fanopluslinehasnoextension} implies that $f$ is in a parallel pair of $M / d$. Hence $\{e, f\}$ and $\{d, f\}$ are both contained in triangles of $M$; call them $T_e$ and $T_d$.
Since $r_M(L \cup \{a, b, c\}) = 3$, $\{a, b, c\}$ is a triangle of $M / x$ and hence of $N$. Since $P_6$ has only one triangle, this is the unique triangle of $N$, so neither $T_e$ nor $T_d$ is contained in $E(N)$. This means that $\sqcap_M(\{e, f\}, L) = \sqcap_M(\{d, f\}, L) = 1$. 
But then $r_M(L \cup \{d, e, f\}) = 3$ so $r_{M / x}(\{d, e, f\}) = 2$, contradicting the fact that $\{a, b, c\}$ is the unique triangle of $N$.
This proves (\ref{clm:nop6-twotriangles}).
\\

We let $Z, Z' \in \mathcal{Z}$ be as in (\ref{clm:nop6-twotriangles}); we may assume that $Z = \{a, b\}$ and $Z' = \{c, d\}$.
We may assume that $w \in \cl_M(\{a, b\})$. Since no two triangles of $P$ are disjoint, we also have $w \in \cl_M(\{c, d\})$, and $\{a, b, w\}$ and $\{c, d, w\}$ are triangles of $M$.
By symmetry between $u$ and $v$, we may assume that the other triangles of $P$ are $\{a, c, u\}$, $\{a, d, v\}$, $\{b, c, v\}$, $\{b, d, u\}$, and $\{u, v, w\}$.

We note that $r_M(Z \cup Z' \cup L) = 4$, otherwise $\cl_M(Z \cup Z' \cup L)$ would be a hyperplane of $M$ containing all but at most two elements, contradicting the fact that $M$ is $3$-connected. Also, $e$ is not contained in $\cl_M(Z \cup L)$, $\cl_M(Z' \cup L)$, or $\cl_M(Z \cup Z')$, since each of these sets has rank three in $M / e$.

If $f \in L$, then $\{a, b, f\}$ and $\{c, d, f\}$ would both be triangles of $M / x$, but $N$ has only one triangle; so $f \not\in L$. Since $M / e \d f$ has $P$ as a restriction, it follows from \autoref{lem:fanopluslinehasnoextension} that $f$ is contained in a parallel pair of $M / e$, so $\{e, f\}$ is contained in a triangle of $M$.

\begin{claim} \label{clm:nop6-nottriangleinN}
 Any triangle of $M$ containing $\{e, f\}$ is disjoint from $Y = E(N) \setminus \{e, f\}$.
\end{claim}

If not, then by symmetry we may assume that $\{e, f, a\}$ is a triangle in $M$.
Since $P_6$ has only one triangle, $\{e, f, a\}$ is the unique triangle of $M$ contained in $E(N)$, so $\{f, b, c\}$ is independent in $M$. Therefore, the modularity of $L$ means that for some $z \in L$, $z \in \cl_M(\{f, b, c\})$. 
We observe that $z \neq x$, for otherwise $\{f, b, c\}$ would be a triangle of $N$, whose unique triangle is $\{e, f, a\}$.
If $\{f, c, z\}$ were a triangle, then in $M / x$, $\{f, c, w\}$ would be a triangle, so we would have $r_{M/x}(\{f, c, d, w\}) = 2$, a contradiction because $N = M/x\d L$ has only one triangle, $\{e, f, a\}$. Therefore, $\{f, c, z\}$ is independent in $M$, and for the same reason, so is $\{f, b, z\}$. Moreover, no element of $L$ is in $\cl_M(\{b, c\})$ by (\ref{clm:nop6-twotriangles}). So $\{f, b, c, z\}$ is a circuit.
Also, $z \neq w$, otherwise we would have $f \in \cl_M(\{a, b, c, d\})$, which would imply $e \in \cl_M(\{a, b, c, d\})$, a contradiction.

Next, we suppose that $z = v$, so $\{f, b, c, v\}$ is a circuit of $M$. But since $\{b, c, v\}$ is a triangle of $M / e$, $\{b, c, v, e\}$ is also a circuit of $M$. This implies that $\{e, f, b, c\}$ contains a circuit; but $r_M(\{e, f, b, c\}) = 4$. So $z \neq v$ and we have $z = u$ and $\{f, b, c, u\}$ is a circuit.

Since $\{a, c, u\}$ is a triangle of $M / e$, the set $\{e, a, c, u\}$ is also a circuit of $M$. Hence $\{f, a, c, u\}$ is a dependent set. We consider all its $3$-element subsets.
The sets $\{a, c, u\}$ and $\{f, c, u\}$ are independent because they are proper subsets of the circuits $\{e, a, c, u\}$ and $\{f, b, c, u\}$, respectively.
Furthermore, $\{f, a, c\}$ and $\{f, a, u\}$ are independent because $e$ does not lie in $\cl_M(\{a, c\})$ or in $\cl_M(\{a\} \cup L)$.
Therefore, $\{f, a, c, u\}$ is a circuit.

Since $\{f, b, c, u\}$ and $\{f, a, c, u\}$ are both circuits, $r_M(\{f, a, b, c, u\}) = 3$. But $\cl_M(\{f, a, b, c, u\}) = E(M)$ and $M$ has rank four. This proves (\ref{clm:nop6-nottriangleinN}).

\begin{claim} \label{clm:nop6-fnotinplane}
 $f \not\in \cl_M(\{a, b, c, d\})$, $f \not\in \cl_M(L \cup \{a, b\})$, and $f \not\in \cl_M(L \cup \{c, d\})$.
\end{claim}

We know that $\{e, f\}$ is contained in a triangle of $M$ so (\ref{clm:nop6-nottriangleinN}) implies that $\{e, f, z\}$ is a triangle for some $z \in L$. 
Therefore, $f$ does not lie in $\cl_M(L \cup \{a, b\})$ or $\cl_M(L \cup \{c, d\})$ because $e$ does not.

We may assume that $f \in \cl_M(\{a, b, c, d\})$.
Clearly $z \neq x$ since $M / x$ has the simple matroid $N$ as a restriction.
If $z = w$, then $e \in \cl_M(\{f, w\}) \subset \cl_M(\{a, b, c, d\})$, a contradiction.
So we may assume that $z \in \{u, v\}$, and by symmetry between $u$ and $v$ we may assume that $z = u$ so $\{e, f, u\}$ is a triangle.

We recall that $r_{M / e}(\{a, c, u\}) = 2$, so $r_M(\{a, c, e, f, u\}) = 3$. Also, we have $r_M(\{a, b, c, d, f\}) = 3$. The intersection of two distinct planes in a matroid has rank at most two; so we have $r_M(\{a, c, f\}) = 2$.
Similarly, we have $r_M(\{b, d, f\}) = 2$ because $\{b, d, f\}$ is contained in the intersection of the distinct planes $\cl_M(\{b, d, e, f, u\})$ and $\cl_M(\{a, b, c, d, f\})$.
But then both $\{a, c, f\}$ and $\{b, d, f\}$ are triangles of $N$, a contradiction.
This proves (\ref{clm:nop6-fnotinplane}).
\\

The last claim implies that $\{a, b, c, d\}$ is a $4$-element circuit of $M / f$ and that none of $a, b, c$, or $d$ are in $\cl_{M / f}(L)$. Therefore, the modularity of $L$ implies that for each $2$-element subset $X$ of $\{a, b, c, d\}$, there is an element of $L$ in $\cl_{M / f}(X)$.

For such a set $X$, $x$ is contained in $\cl_{M / f}(X)$ only if $r_{M / x}(X \cup \{f\}) = 2$. Since there is a unique triangle in $N$, there is exactly one $2$-element subset $X \subset \{a, b, c, d\}$ with $x \in \cl_{M / f}(X)$.
Hence there are exactly five $2$-element subsets $X \subset \{a, b, c, d\}$ such that $\cl_{M / f}(X)$ contains one of $u$, $v$, or $w$.
This implies that $M / f \d e, x \cong F_7^-$.
\end{proof}

The following proposition is a corollary of \autoref{thm:threeexcludedminors} along with Lemmas~\ref{lem:nou26}, \ref{lem:nou46}, and \ref{lem:nop6} and the fact that $P_8''$ has no four-point line.

\begin{proposition} \label{prop:ternaryorquaternary}
If $M$ is a $3$-connected matroid with a modular $4$-point line and no $F_7^-$- or $(F_7^-)^*$-minor, then either $M$ is quaternary or $M$ is isomorphic to a minor of $S(5, 6, 12)$.
\end{proposition}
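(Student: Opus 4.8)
The plan is to derive the proposition as a direct consequence of \autoref{thm:threeexcludedminors}. Suppose $M$ is a $3$-connected matroid with a modular $4$-point line and no $F_7^-$- or $(F_7^-)^*$-minor. If $M$ is quaternary, then the first outcome holds and we are done, so we may assume that $M$ is not $\GF(4)$-representable. Then \autoref{thm:threeexcludedminors} applies, placing $M$ into one of its three cases, and the entire task reduces to eliminating the first two of them.

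For the first case, we must rule out each of the minors $U_{2,6}$, $U_{4,6}$, $P_6$, $F_7^-$, and $(F_7^-)^*$. The minors $F_7^-$ and $(F_7^-)^*$ are excluded by hypothesis. \autoref{lem:nou26} shows that a $3$-connected matroid with a modular $4$-point line has no $U_{2,6}$-minor, and \autoref{lem:nou46} shows it has no $U_{4,6}$-minor, so those two are excluded as well. Finally, if $M$ had a $P_6$-minor, then \autoref{lem:nop6} (which again needs only that $M$ is $3$-connected with a modular $4$-point line) would force an $F_7^-$-minor, contradicting our standing hypothesis. Hence the first case of \autoref{thm:threeexcludedminors} cannot occur.

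For the second case, $M$ would have to be isomorphic to $P_8''$; but $P_8''$ has no four-point line, whereas $M$ has a modular $4$-point line, so this is impossible. Therefore the third case holds, and $M$ is isomorphic to a minor of $S(5,6,12)$ (of rank and corank at least $4$, although we do not need this), which completes the argument. Since all of the substantive matroid-theoretic work has already been carried out in Lemmas~\ref{lem:nou26}, \ref{lem:nou46}, and \ref{lem:nop6} --- with the $P_6$ case being by far the most involved --- no genuine obstacle remains at this stage; the only thing requiring care is to check that the hypotheses of \autoref{thm:threeexcludedminors} and of the three lemmas are actually in force, namely $3$-connectedness together with the existence of a modular $4$-point line, both of which are part of the assumptions of the proposition.
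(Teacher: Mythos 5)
Your proof is correct and follows exactly the same route the paper takes: the paper records this proposition as an immediate corollary of \autoref{thm:threeexcludedminors}, Lemmas~\ref{lem:nou26}, \ref{lem:nou46}, and \ref{lem:nop6}, and the fact that $P_8''$ has no four-point line, and your write-up simply spells out that one-line deduction.
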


We now finish the proof of our main theorem, \autoref{thm:4ptlinetheorem}. 

\fourpointlinetheorem*

\begin{proof}
We let $M$ be a $3$-connected matroid with a modular $4$-point line.
If $M$ has no $F_7^-$- or $(F_7^-)^*$-minor, then by \autoref{prop:ternaryorquaternary} it is representable over $\GF(3)$ or $\GF(4)$ (since $S(5, 6, 12)$ is a ternary matroid).
Otherwise, we may assume that $M$ has no $F_7$-minor.
Then \autoref{lem:U25givesfano} implies that $M$ has no $U_{2,5}$-minor. If $r(M) = 2$ then $M$ is ternary; otherwise $M$ has rank and corank at least three so \autoref{lem:U25U35} implies that $M$ has no $U_{3,5}$-minor. Also, $M$ has no $F_7^*$-minor by the dual of \autoref{lem:F7F7*}.
It now follows from \autoref{thm:reidstheorem} that $M$ is ternary.
\end{proof}

An unresolved question is whether we can guarantee the existence of an $F_7^-$-minor in outcome (\ref*{out:4ptlinetheorem-last}) of the theorem rather than just an $F_7^-$- or an $(F_7^-)^*$-minor. This would make it symmetric with respect to $F_7$ and $F_7^-$. It would be the case if we could prove that every $3$-connected matroid $M$ with a modular $4$-point line and an $(F_7^-)^*$-minor has an $F_7^-$-minor.
The techniques used to prove \autoref{lem:nop6} might settle this question; however, it is a more difficult problem, particularly because $(F_7^-)^*$ is larger than $P_6$ and because $(F_7^-)^*$ has no $U_{2,5}$--minor so we can not make use of \autoref{lem:U25givesfano}.

\section*{Acknowledgements}

I thank Jim Geelen for helpful discussions and advice about the proof.

\end{document}